\documentclass[11pt,twoside,reqno]{amsart}

\usepackage[utf8]{inputenc}
\usepackage[T1]{fontenc}
\usepackage{lmodern}
\usepackage{amsmath,amsfonts,amsthm,amssymb}
\usepackage{mathrsfs}
\usepackage{graphicx,psfrag,overpic}
\graphicspath{{figures/}}
\usepackage{bm,enumerate}
\usepackage{cases}
\usepackage[all,cmtip,line]{xy}
\usepackage{array,CJK}
\usepackage{setspace}
\usepackage{color}
\definecolor{orange}{rgb}{1,0.5,0}
\usepackage{yhmath}

\usepackage{tikz}
\usetikzlibrary{arrows.meta,calc}
\usepackage{booktabs}

\numberwithin{equation}{section}
\numberwithin{figure}{section}

\usepackage{graphics}
\usepackage{epsfig}
\newtheorem{claim}{\bf \t}[part]

\newtheorem{theorem}{Theorem}[section]

\newtheorem{lemma}{Lemma}[section]
\newtheorem{proposition}[theorem]{Proposition}
\newtheorem{remark}{Remark}[section]

\newtheorem{definition}{Definition}[section]
\newtheorem{thm}{Theorem}[section]

\usepackage[hidelinks]{hyperref}

\hypersetup{pdftitle={Reconstruction of a Wedge and Post-Shock Flow from a Prescribed Leading Shock},
pdfauthor={Qianfeng Li and Meizhi Qian}}
\title[Inverse reconstruction from prescribed shocks]{Reconstruction of a Wedge and Post-Shock Flow from a Prescribed Leading Shock}

\author{Qianfeng Li$^{1}$ and Meizhi Qian$^{2}$}

\thanks{$^{1}$ Department of Mathematics, Friedrich-Alexander-Universität Erlangen-Nürnberg, Cauerstr. 11, 91058 Erlangen, Germany. Email: \href{mailto:qianfeng.li@fau.de}{\texttt{qianfeng.li@fau.de}}}

\thanks{$^{2}$ Department of Mathematics, Friedrich-Alexander-Universität Erlangen-Nürnberg, Cauerstr. 11, 91058 Erlangen, Germany. Email: \href{mailto:meizhi.qian@fau.de}{\texttt{meizhi.qian@fau.de}}}

\makeatletter
\@ifundefined{subjclassname@2020}{\@namedef{subjclassname@2020}{\textup{2020} Mathematics Subject Classification}}{}
\makeatother
\subjclass[2020]{35R30, 35L65, 35L67, 35Q31, 76N10}

\begin{document}
	\begin{abstract}
        In this paper, we reconstruct a wedge and its post-shock flow from a prescribed leading shock and a uniform supersonic incoming state. Full Euler Rankine--Hugoniot relations supply the shock data, while a reduced interior model combines homogeneous acoustic characteristic equations, entropy transport, and the Bernoulli relation. A hodograph transformation yields a linear hyperbolic equation for the physical coordinate. Comparing a shock-to-wedge mass bound with a hodograph non-degeneracy threshold gives a sufficient, data-dependent reconstruction criterion. Under the stated assumptions, this criterion ensures a unique \(C^1\) post-shock flow and a \(C^2\) wedge throughout the characteristic domain determined by the prescribed shock segment. Numerical reconstructions examine empirical grid convergence, Mach-number dependence, and discrepancies between the reduced-model and full-Euler inverse reconstructions.
\end{abstract}

\keywords{inverse problem, supersonic flow, shock wave, hodograph transformation, compressible Euler equations}
\maketitle
\section{Introduction}

Supersonic flow past wedges is a classical problem in gas dynamics \cite{Courant1948}. In the direct problem, the wedge shape and the incoming flow are prescribed, and one seeks the resulting flow field together with the leading shock; see \cite{ChenLi2008,ChenZhangZhu2006,Hu2013,Schaeffer1976,Yin2006,Zhang1999,Zhang2003} and references therein. In this paper, we consider the corresponding inverse problem: the leading shock and the uniform incoming flow are prescribed, while the corresponding wedge and the post-shock flow field are to be determined.

The two-dimensional steady full compressible Euler equations for a polytropic gas read
\begin{equation}\label{eq:fullEuler}
\left\{
\begin{aligned}
&(\rho u)_x+(\rho v)_y=0,\\&
(\rho u^2+p)_x+(\rho uv)_y=0,\\&
(\rho uv)_x+(\rho v^2+p)_y=0,\\&
(\rho uE+pu)_x+(\rho vE+pv)_y=0,
\end{aligned}
\right.
\end{equation}
where
\[
E=\frac{u^2+v^2}{2}+e,
\qquad
e=\frac{p}{(\gamma-1)\rho},
\qquad 1<\gamma<3.
\]
Here $\rho$, $(u,v)$, $p$, $E$, and $e$ denote the density, velocity, pressure, total energy per unit mass, and internal energy per unit mass, respectively. See Appendix~\ref{app:full-euler-characteristics} for the characteristic form of \eqref{eq:fullEuler}. The incoming flow is a uniform horizontal supersonic state, normalized as
\begin{equation}\label{eq:initialdata}
(\rho,u,v,p)|_{x=0}
=(\rho_\infty,1,0,p_\infty),
\end{equation}
with $\rho_\infty>0$, $p_\infty>0$, and
\[
M_\infty^2=\frac{\rho_\infty}{\gamma p_\infty}>1,
\qquad
\epsilon:=M_\infty^{-2}\in(0,1).
\]
On the wedge surface $\mathbf W:=\{(x,y):y=f(x),\ x\ge0,\ f(0)=0\},$ the slip boundary condition reads
\begin{equation}\label{eq:slip}
v=uf'(x).
\end{equation}
Across the leading shock $\mathbf S:=\{(x,y):y=\phi(x),\ x\ge0,\ \phi(0)=0\},$ Rankine--Hugoniot relations and the compressive entropy condition hold. The state ahead of the prescribed leading shock is the uniform incoming state. The full Euler Rankine--Hugoniot relations on the compressive branch determine the downstream trace \cite{Courant1948,NACA1135,Anderson2003}: \begin{equation}\label{eq:shockcondition}
    (\rho, u, v, p)\big|_{x>0,y=\phi(x)-}=(\rho_{\rm sh}, u_{\rm sh}, v_{\rm sh}, p_{\rm sh})(x)
\end{equation}
where
\begin{equation*}
\begin{aligned}
\rho_{\rm sh}(x)&=\rho_\infty r,
&
p_{\rm sh}(x)&=p_\infty\left[
1+\frac{2\gamma}{\gamma+1}(M_{n,\infty}^2-1)\right],\\
u_{\rm sh}(x)&=\frac{1+s^2/r}{1+s^2},
&
v_{\rm sh}(x)&=\frac{s(1-1/r)}{1+s^2},
\end{aligned}
\end{equation*}
with
\[
s=\phi'(x), \qquad r=\frac{(\gamma+1)M_{n,\infty}^2}
{(\gamma-1)M_{n,\infty}^2+2},
\qquad
M_{n,\infty}^2=\frac{s^2}{\epsilon(1+s^2)}>1.
\]

For a fixed $X_*>0$, consider the prescribed leading shock segment
\[
\mathbf S^*
:=\mathbf S\cap\{(x,y):0\le x\le X_*\}.
\]
Let $\mathbf\Gamma^*$ denote the backward $\lambda_+$-characteristic issuing from
$(X_*,\phi(X_*))$.
We denote by $\Omega^*$ the bounded domain enclosed by
$\mathbf S^*$, $\mathbf\Gamma^*$, and $\mathbf W$, and set
\(\mathbf W^*:=\partial\Omega^*\cap\overline{\mathbf W}\) for the corresponding
wedge segment. The inverse problem is to determine $\mathbf W^*$ and the
post-shock flow in $\Omega^*$.

For the inverse problem with a prescribed leading shock, Wang \cite{Wang2011} established global existence and uniqueness for planar supersonic potential flow under smallness assumptions on the variation of the shock slope and a weighted norm of its second derivative. Li and Zhang \cite{LiZhang2022} subsequently removed the smallness requirement on the shock-slope variation under suitable geometric assumptions. They used mass conservation to estimate characteristic lengths and obtained both a global existence theorem under a small weighted second-derivative condition and a criterion excluding global classical solutions, thereby clarifying the role of the prescribed shock geometry in the reconstruction. For conical configurations, Hu, Li, and Zhang \cite{HuLiZhang2024Cone} established global classical solvability for axisymmetric hypersonic potential flow by constructing a degenerate background solution and applying scaling and perturbation arguments. Zhang \cite{Zhang2024LocalCone} obtained a local reconstruction near the cone vertex for a small angle of attack and a sufficiently large incoming Mach number, together with asymptotic expansions for the velocity and the cone slope. Li and Wang \cite{LiWang2006Shock} studied a related inverse generalized Riemann problem for quasilinear hyperbolic conservation laws, recovering initial data from prescribed non-degenerate shock trajectories.

Computational methods have also been developed to reconstruct solid boundaries from prescribed shock geometries. Sobieczky, Dougherty, and Jones \cite{Sobieczky1990Waverider} studied hypersonic waverider design from given shock waves using osculating-cone constructions and cross-marching techniques. Subsequent characteristic-based approaches include the three-dimensional inverse method of Yu, Huang, and Xia \cite{YuHuangXia2020} for inlet-lip design, the curved-shock characteristic method of Shi et al.\ \cite{Shi2021} for planar and axisymmetric flowfields, and the space-streamline-based method of Zhou et al.\ \cite{ZhouJinZhuangShi2022} for three-dimensional inverse design. A different class of inverse problems prescribes the wall pressure rather than the shock position; see \cite{LiZhang2025BendingWall,PuZhang2023} for potential flow and \cite{ChenPuZhang2025} for the full Euler system.

A curved shock generally produces nonconstant entropy and vorticity in the downstream full Euler flow. Consequently, the acoustic characteristic equations contain entropy-gradient terms, which couple the acoustic variables to streamline transport. See, for example, \cite{Courant1948,ChenZhangZhu2006,ChenLi2008}{}. We study a reduced interior system that omits these acoustic source terms while retaining the full Euler jump relations on the prescribed shock. Precisely, the reduced model reads
\begin{equation}\label{eq:PotantialFlow}
\left\{
\begin{aligned}
&R_x+\lambda_-R_y=0,
\\&
S_x+\lambda_+S_y=0,
\\&
u\Sigma_x+v\Sigma_y=0,
\\&
\frac{u^2+v^2}{2}
+\frac{\gamma p}{(\gamma-1)\rho}
=
\frac12+\frac{\gamma p_\infty}{(\gamma-1)\rho_\infty},
\end{aligned}
\right.
\end{equation}
with Riemann variables $R,S$ defined in \eqref{eq:defofRiemanninvariants}, characteristic slopes $\lambda_\pm$ in \eqref{eq:defoffcharacteristics}, and entropy variable $\Sigma$ in \eqref{eq:defofflowstates}. The hybrid model also implies mass conservation; see Appendix~\ref{app:mass-conservation}.

In this article, we study the inverse problem by a hodograph transformation method. A key point is to prove that the hodograph transformation is locally invertible, that is, its Jacobian is nonzero. Under the shock assumptions below, the signs $x_R<0<x_S$ ensure that the Jacobian is nonzero on the image of the leading shock in the phase plane. Our analysis gives a quantitative condition preserving this sign structure during continuation toward the wedge. Mass conservation supplies a complementary upper bound for the shock-to-wedge distance. Comparing the two bounds yields a sufficient criterion for a classical reconstruction throughout the characteristic domain determined by the prescribed shock segment.

The numerical study examines the reconstructed wedge geometry, the hodograph sign conditions, empirical grid convergence, and Mach-number dependence. Comparisons with full-Euler inverse Cauchy reconstructions using identical shocks and upstream data assess the effect of the reduced interior model. The discrepancies persist over the tested Mach-number range, so the observed stabilization within the hybrid model does not establish convergence toward full Euler. These comparisons concern inverse reconstructions and do not constitute independent forward validation.

Define a compact region in the phase plane
\[
\mathcal P:=\bigl\{(R,S):R(X_*,\phi(X_*))\le R\le R(0,0),\quad
\chi(R)\le S\le S(X_*,\phi(X_*))\bigr\},
\]
where $S=\chi(R)$ represents the prescribed leading shock trace in the phase plane, and the values of $R$ and $S$ on the shock are taken from the post-shock side. Under the hypotheses of the theorem below, Lemma~\ref{lem:MonotonicityofRS} ensures that $\chi$ and $\mathcal P$ are well defined. Moreover, define the shock-to-wedge mass bound and the hodograph non-degeneracy threshold, respectively, by
\begin{equation}\label{eq:bounds-intro}
B_{\rm mass}(x):=
\frac{\rho_\infty\,\phi\!\left(x\right)}
{\sqrt{\gamma\rho p}(x,\phi(x))\sqrt{1+\beta(R_{\rm sh}(x))}}
\end{equation}
and
\begin{equation}\label{eq:nd-bound-intro}
B_{\rm nd}(x):=\frac{-x_R(R_{\rm sh}(x),S_{\rm sh}(x))}{\alpha(R_{\rm sh}(x))}.
\end{equation}
Here $(R_{\rm sh}(x),S_{\rm sh}(x))$ is the post-shock phase trace, and $\alpha(R)>0$ and $\beta(R)\ge0$ are defined from the characteristic coefficients on $\mathcal P$ in \eqref{eq:phase-extrema} and \eqref{eq:beta-definition}.

We use the following assumptions:
\begin{itemize}
    \item[ ] (H1): For $x\in[0,X_*]$, $\phi''(x)>0$, $\phi'(x)\in\mathcal I_{\rm sup}$, and $\mathcal D(\phi'(x))<0$.
    \item[ ] (H2): $M<+\infty$ and $-\frac{\pi}{2}<\theta-\theta_m<\theta+\theta_m<\frac{\pi}{2}$ in the compact region $\mathcal P$.
\end{itemize}
Here $\mathcal I_{\rm sup}$ and $\mathcal D$ are given in \eqref{eq:admissibleslope} and \eqref{eq:Dtdefinition}, respectively, and the flow variables $M, \theta_m, \theta$ are given in \eqref{eq:defofflowstates}.

We now state the main theorem.

\begin{thm}\label{thm:main}
Let $X_*>0$ and let $y=\phi(x)$, $0\le x\le X_*$, be a prescribed shock, where $\phi\in C^2([0,X_*])$ and $\phi(0)=0$. Suppose that \textup{(H1)--(H2)} hold.
If for $0\le x\le X_*,$
\begin{equation}\label{eq:maincriteria}
B_{\rm mass}(x)< B_{\rm nd}(x),
\end{equation}
then the hybrid problem \eqref{eq:slip}--\eqref{eq:PotantialFlow} admits a $C^2$ wedge surface $\mathbf W^*$ and a $C^1$ post-shock flow in $\Omega^*$.
\end{thm}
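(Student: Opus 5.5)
We prove the theorem by working in the hodograph plane. Since $R$ and $S$ satisfy homogeneous transport equations along $\lambda_\mp$, we take $(R,S)$ as independent variables and $x=x(R,S)$, $y=y(R,S)$ as unknowns. The relations $x_S\lambda_- = y_S$ and $x_R\lambda_+=y_R$ are obtained by interchanging dependent and independent variables in the first two equations of \eqref{eq:PotantialFlow}. Cross-differentiating eliminates $y$ and gives a linear hyperbolic equation of the form
\[
x_{RS}+a(R,S)\,x_R+b(R,S)\,x_S=0,\qquad a=\frac{\partial_S\lambda_+}{\lambda_+-\lambda_-},\quad b=-\frac{\partial_R\lambda_-}{\lambda_+-\lambda_-}.
\]
Here $\lambda_\pm$ are functions of $(R,S)$ alone. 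This uses the Bernoulli relation together with the entropy being constant along streamlines, which we handle by parametrizing $\Sigma$ by its shock value. By (H2), the coefficients are smooth and bounded on the compact set $\mathcal P$.

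By Lemma~\ref{lem:MonotonicityofRS} and (H1), the shock maps to the monotone curve $S=\chi(R)$. On this curve the Rankine--Hugoniot data \eqref{eq:shockcondition} give $x=x_{\rm sh}(R)$ and $y=\phi(x_{\rm sh})$, and the relations above prescribe $x_R$ and $x_S$ there with signs $x_R<0<x_S$. This is a noncharacteristic Goursat/Cauchy problem on $\mathcal P$. Standard iteration for linear hyperbolic equations with $C^1$ data yields a unique solution $x\in C^2(\mathcal P)$, and $y$ follows by integrating $dy=\lambda_+x_R\,dR+\lambda_-x_S\,dS$, which is exact by construction.

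The key step is non-degeneracy: the Jacobian $\partial(x,y)/\partial(R,S)=x_Rx_S(\lambda_--\lambda_+)$ must not vanish on the part of $\mathcal P$ that corresponds to $\Omega^*$. We treat $p=x_R$ and $q=x_S$ as a first-order system: $p_S=-a p-bq$ along $R=\mathrm{const}$ and $q_R=-ap-bq$ along $S=\mathrm{const}$. A Gronwall comparison shows that, starting from a shock point with value $R=R_{\rm sh}(x)$, $p$ remains negative as long as the $S$-increment travelled is below a threshold. With $\alpha(R)$ as in \eqref{eq:phase-extrema}, this yields $p(R,S)\le x_R(R_{\rm sh},S_{\rm sh})+\alpha(R)\cdot(\text{distance travelled})$. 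Converting the travelled distance into physical length along the $\lambda_-$ characteristic shows that $x_R<0$ holds for any length below $B_{\rm nd}(x)$ in \eqref{eq:nd-bound-intro}; positivity of $x_S$ is handled symmetrically or is automatic from the flow direction. This gives a local diffeomorphism up to that length.

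Next we show the wedge is reached before that length. By mass conservation (Appendix~\ref{app:mass-conservation}), the incoming flux $\rho_\infty\phi(x)$ through the vertical segment below the shock equals the flux across the $\lambda_-$ characteristic from the shock point to the wedge. That flux is bounded below by the characteristic length times $\sqrt{\gamma\rho p}\sqrt{1+\beta}$, where the normal mass flux across a characteristic is $\rho c$ times the projection factor, and $\beta$ from \eqref{eq:beta-definition} controls the variation over $\mathcal P$. Hence the length is at most $B_{\rm mass}(x)$, and \eqref{eq:maincriteria} keeps every point of $\Omega^*$ inside the non-degenerate zone.

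A continuity argument in $x\in[0,X_*]$ then shows that $(R,S)\mapsto(x,y)$ is a global $C^1$ diffeomorphism onto $\Omega^*$. Injectivity follows because $x_R<0<x_S$ makes the characteristics monotone. Inverting gives a $C^1$ flow $(R,S,\Sigma)$ and hence $(\rho,u,v,p)$. The wedge is the streamline from the origin, so $f'=v/u$ is $C^1$ and $f\in C^2$. Uniqueness follows from the linear problem. We expect the main obstacle to be getting the sharp Gronwall comparison constant in the non-degeneracy step to match exactly the $\alpha$ defined by the paper.
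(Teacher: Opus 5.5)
Your overall architecture (hodograph linearization, the Cauchy problem on $\mathcal P$, an integrating-factor threshold for $x_R<0$, a mass-flux bound, continuation and injectivity) matches the paper's. However, the mass step has a genuine gap.

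The flux through the segment $R=\mathrm{const}$ from $A=(R,\chi(R))$ up to level $S$ is $\int_{\chi(R)}^{S}\rho c\sqrt{1+\lambda_-^2}\,x_S\,d\eta$, and $B_{\rm mass}$ freezes $\rho c$ at its value at the shock point $A$. Your claim that this flux is at least $(\rho c)(A)\sqrt{1+\beta(R)}\,[x(R,S)-x(R,\chi(R))]$ therefore needs $\rho c\ge(\rho c)(A)$ along the whole segment. The function $\beta$ only controls $\lambda_-^2$. The factor $c$ causes no trouble, since $M_S<0$ and Bernoulli give $c_S>0$. But $\rho^{\gamma-1}=c^2/(\gamma\Sigma)$ depends on the entropy, which is not constant behind a curved shock, and nothing in your proposal prevents it from increasing along the segment.

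The missing ingredient is $\Sigma_S<0$ between shock and wedge. First, $\Sigma_{\rm sh}$ increases with $x$: the Rankine--Hugoniot entropy jump increases with $M_{n,\infty}^2$, which increases because $\phi''>0$. Then, by analysing the streamline equation in the $(R,S)$-plane, you must show two things. Every streamline traced upstream reaches $\Gamma_{\rm sh}$; this holds because its phase slope $\mu$ satisfies $\mu<\chi'<0$ on the shock and the wedge is itself a streamline. And raising $S$ at fixed $R$ moves the streamline's foot point to smaller $x$ on the shock. Only then do you get $\rho_S>0$ and the frozen lower bound. Without it, $B_{\rm mass}$ does not bound the shock-to-wedge distance, and comparing it with $B_{\rm nd}$ proves nothing.

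Several further steps need adjustment.

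Both bounds must concern the same quantity, the $x$-increment $x(R,S)-x(R,\chi(R))=\int_{\chi(R)}^{S}x_S\,d\eta$. With $E=\exp\int_{\chi(R)}^{S}\frac{\partial_S\lambda_+}{\lambda_+-\lambda_-}\,d\tau$ one has exactly $E\,x_R=g(R)+\int_{\chi(R)}^{S}\mathcal A\,x_S\,d\eta\le g(R)+\alpha(R)\,[x(R,S)-x(R,\chi(R))]$ as long as $x_S\ge0$. So no conversion to physical length is needed, and $\alpha$ matches with no loss; the obstacle you anticipated is not one. Conversely, if "length" meant arc length, the flux would be exactly $\int\rho c\,ds$, and your factor $\sqrt{1+\beta}$ would be unjustified.

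The sign $x_S>0$ is neither symmetric nor a matter of flow direction. Along $S=\mathrm{const}$, $\partial_R x_S$ dominates a positive multiple of $x_S$ only if $x_R\le0$ there, while the $x_R$ estimate needs $x_S\ge0$ on vertical segments. This coupling requires a first-failure argument over characteristic triangles, with the distance condition imposed on the whole triangle.

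The wedge is unknown, and the mass identity presupposes a nondegenerate flow between shock and wedge. So the logic must be a bootstrap: construct the wedge as the solution $R=\ell(S)$ of the phase-plane slip ODE from $O^*$, and continue in the phase level $S_0$, applying the mass bound only on the region already constructed. This also requires the case where the line $R=\mathrm{const}$ meets the level $S=S_0$ before the wedge. There the extra flux through that edge is an outflow ($u\lambda_+-v>0$, $x_R<0$), so the inequality survives.

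Finally, with $\phi\in C^2$ the shock data for $x_R,x_S$ contain $\phi''$ and are only continuous. You therefore get $x\in C^1$ with continuous $x_{RS}$, not $x\in C^2$; this suffices for the rest of the argument.
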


\begin{remark}
The assumptions on $\phi$ select a compressive shock with a supersonic post-shock state and yield the initial hodograph signs $x_R<0<x_S$. The condition $u>c>0$ ensures that both acoustic characteristic slopes are finite and strictly ordered. The inequality \eqref{eq:maincriteria} is the reconstruction criterion: it compares two bounds determined by the prescribed data. It is sufficient; necessity and sharpness are not asserted.
\end{remark}

The rest of this paper is organized as follows.
Section~\ref{sec:phase-reformulation} reformulates the hybrid inverse
problem in the phase plane through the hodograph transformation.
We derive the transformed equations, establish the properties of the
Cauchy data induced by the prescribed shock, and express the unknown
wedge as an integral curve in the phase plane.
Section~\ref{sec:proof-main} first establishes global existence and
uniqueness for the Cauchy problem associated with the linear hyperbolic
hodograph equation on $\mathcal P$. We then compare the
mass-conservation bound on the shock-to-wedge distance with the
hodograph non-degeneracy threshold to preserve invertibility in the
region between the wedge and the prescribed leading shock.
Together with the continuation of the wedge and the global injectivity
of the hodograph map, these estimates yield the physical reconstruction
and complete the proof of Theorem~\ref{thm:main}.

\section{Reformulation of the inverse problem in phase space}\label{sec:phase-reformulation}

In this section, we reformulate the inverse problem \eqref{eq:slip}--\eqref{eq:PotantialFlow} in the
phase plane. Write $s=\phi'(x)>0$ for the shock slope. Throughout this section, the subscript ``$\rm sh$'' denotes evaluation at $(x,\phi(x)-)$.

We use the flow variables
\begin{equation}\label{eq:defofflowstates}
\begin{aligned}
&\text{sonic speed }c:=\sqrt{\frac{\gamma p}{\rho}},
\quad
\text{flow speed }q:=\sqrt{u^2+v^2},
\quad
 \text{entropy variable }\Sigma:=\frac{p}{\rho^\gamma},\\
&\text{Mach number }M:=\frac{q}{c}, \quad \text{Mach angle }\theta_m:=\arcsin\frac{1}{M},
\quad
\text{flow angle }\theta:=\arctan\frac{v}{u},
\end{aligned}
\end{equation}
and the two acoustic characteristic slopes are
\begin{equation}\label{eq:defoffcharacteristics}
    \lambda_-:=\tan(\theta-\theta_m), \qquad \lambda_+:=\tan(\theta+\theta_m),
\end{equation}
and the corresponding Riemann variables are
\begin{equation}\label{eq:defofRiemanninvariants}
R:=\theta+\nu(M),
\qquad
S:=\theta-\nu(M),
\end{equation}
where the Prandtl--Meyer function is
\[
\nu(M)
=
\sqrt{\frac{\gamma+1}{\gamma-1}}
\arctan\!\left(
\sqrt{\frac{\gamma-1}{\gamma+1}(M^2-1)}
\right)
-
\arctan\sqrt{M^2-1}.
\]

Since $\nu$ is strictly increasing for $M>1$, the relations
\begin{equation}\label{eq:phase-acoustic-variables}
\theta=\frac{R+S}{2},\qquad
M=\nu^{-1}\!\left(\frac{R-S}{2}\right)
\end{equation}
recover the flow angle and Mach number whenever $(R-S)/2$ lies in the
supersonic range. Consequently, together with the last formula in \eqref{eq:PotantialFlow}, we have that $u,v,c$ and $\lambda_\pm$ are functions of $(R,S)$ alone.

Introduce the hodograph transformation
\[
T:(x,y)\longmapsto(R(x,y),S(x,y)).
\]
Where $J:=\partial(R,S)/\partial(x,y)\ne0$, write its local inverse as
$F(R,S)=(x(R,S),y(R,S))$. Moreover, the inverse hodograph Jacobian is
\begin{equation}\label{eq:inverse-hodograph-jacobian}
\frac{\partial(x,y)}{\partial(R,S)}
=x_Ry_S-x_Sy_R
=(\lambda_--\lambda_+)x_Rx_S.
\end{equation}

Under the hodograph transformation, the first two equations of
\eqref{eq:PotantialFlow} become
\begin{equation}\label{eq:hodographsystemphase}
y_R=\lambda_+x_R,
\qquad
y_S=\lambda_-x_S.
\end{equation}
The compatibility condition $(y_R)_S=(y_S)_R$ gives
\begin{equation}\label{eq:hodographxequationphase}
x_{RS}
=
\frac{\partial_R\lambda_-}{\lambda_+-\lambda_-}\,x_S
-
\frac{\partial_S\lambda_+}{\lambda_+-\lambda_-}\,x_R.
\end{equation}
Note that the coefficients depend only on
the phase coordinates $(R,S)$. \eqref{eq:hodographxequationphase} is a linear
hyperbolic equation for $x$. Once $x$ is known, the compatible first-order
relations \eqref{eq:hodographsystemphase} determine $y$ up to an additive
constant. Lines $R=\mathrm{constant}$ and $S=\mathrm{constant}$ map to
the $\lambda_-$- and $\lambda_+$-characteristics, respectively.

The following trace property allows us to represent the shock as a graph
in the phase plane.

\begin{lemma}[Monotonicity along a convex shock]\label{lem:MonotonicityofRS}
Let $\phi\in C^2([0,X_*])$ describe a strictly convex shock with positive
curvature, $\phi''(x)>0$, and suppose that
$\phi'(x)\in\mathcal I_{\rm sup}$ for all $x\in[0,X_*]$ with $\mathcal I_{\rm sup}$ in \eqref{eq:admissibleslope}
selecting the compressive, entropy-admissible, supersonic branch.
Then the post-shock speed and flow angle satisfy
\[
q'_{\rm sh}(x)<0,\qquad \theta'_{\rm sh}(x)>0,
\]
and the Riemann traces of \eqref{eq:defofRiemanninvariants} satisfy
\[
S'_{\rm sh}(x)>0,
\qquad
\operatorname{sgn}R'_{\rm sh}(x)
=\operatorname{sgn}\mathcal D(\phi'(x)),
\]
where primes denote differentiation with respect to \(x\), and $\mathcal D$ is the polynomial defined in
\eqref{eq:Dtdefinition}.
\end{lemma}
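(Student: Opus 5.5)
The plan is to reduce everything to derivatives with respect to the shock slope $s=\phi'(x)$. All post-shock quantities $\rho_{\rm sh},u_{\rm sh},v_{\rm sh},p_{\rm sh}$ depend on $x$ only through $s$, and $s'(x)=\phi''(x)>0$. Hence for any trace $G_{\rm sh}(x)=g(s)$ we have $\operatorname{sgn}G'_{\rm sh}(x)=\operatorname{sgn}g'(s)$ on $\mathcal I_{\rm sup}$. The lemma therefore follows from sign statements for functions of the single variable $s$ on the admissible interval $\mathcal I_{\rm sup}$.

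\textbf{Flow angle and speed.} From the explicit formulas,
\[
\tan\theta_{\rm sh}=\frac{s(1-1/r)}{1+s^2/r}=\frac{s(r-1)}{r+s^2}.
\]
Writing $m:=M_{n,\infty}^2=s^2/(\epsilon(1+s^2))$, which is increasing in $s$, I would express $r$ as an increasing function of $m$ and hence of $s$. It is cleaner to use the classical shock-polar picture, in which the normal velocity component is divided by $r$ and the tangential component is preserved. This gives
\[
q_{\rm sh}^2=\frac{1+s^2/r^2}{1+s^2}.
\]
Differentiating in $s$, and using $r\in(1,(\gamma+1)/(\gamma-1))$ with $r$ increasing in $s$, shows $q_{\rm sh}'<0$: the tangential component $1/\sqrt{1+s^2}$ decreases and the normal component after the shock, $s/(r\sqrt{1+s^2})$, must be controlled. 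For $\theta$ I would rather use the $\theta$--$\beta$--$M$ relation with shock angle $\beta=\arctan s$. The standard fact is that $d\theta/d\beta>0$ exactly on the weak branch, before the maximum-deflection angle. I expect $\mathcal I_{\rm sup}$ in \eqref{eq:admissibleslope} to be chosen inside the weak branch, since the supersonic post-shock branch lies below the sonic point, which precedes the detachment point. This gives $\theta'_{\rm sh}>0$.

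\textbf{Riemann traces.} Write $S=\theta-\nu(M)$ and $R=\theta+\nu(M)$. Since $\nu$ is increasing, it suffices to show that the downstream Mach number decreases along the shock. By the Bernoulli relation, $c^2=(\gamma-1)(H-q^2/2)$ with $H$ constant. Therefore $M^2=q^2/c^2$ is increasing in $q$, and $q'_{\rm sh}<0$ gives $M'_{\rm sh}<0$. Then $S'_{\rm sh}=\theta'_{\rm sh}-\nu'(M)M'_{\rm sh}>0$.

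For $R$ we have $R'_{\rm sh}=\theta'_{\rm sh}+\nu'(M)M'_{\rm sh}$, and $\nu'(M)=\sqrt{M^2-1}/(M(1+\frac{\gamma-1}2M^2))$. I would compute $d\theta/ds$ and $\nu'(M)\,dM/ds$ explicitly as rational expressions in $s$, multiplied by the common factor $\sqrt{M^2-1}$. Note that $M^2-1$ downstream is itself rational in $s^2$, and $\theta$'s derivative also carries such factors. After clearing the positive denominators, $R'_{\rm sh}$ should equal a positive factor times a polynomial in $s$ (and $\epsilon,\gamma$), and I would identify that polynomial with $\mathcal D$ from \eqref{eq:Dtdefinition}.

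\textbf{Main obstacle.} The hard step is this last algebraic reduction: isolating a positive prefactor so that exactly $\mathcal D$ remains, and checking that the square roots combine. I would first try parametrizing by $t=s^2$, or by $m$, and expressing $\tan\theta$, $q^2$ and $M^2$ as rational functions of $t$. Then I would use $d\theta=\frac{d(\tan\theta)}{1+\tan^2\theta}$ and $\nu'(M)\,dM=\frac{\sqrt{M^2-1}}{2(1+\frac{\gamma-1}{2}M^2)}\frac{dM^2}{M^2}$, and match the result against the definition of $\mathcal D$.
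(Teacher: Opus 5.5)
\textbf{Gap in the sign identity for $R'_{\rm sh}$.} Your reduction to the slope variable $s$, the Bernoulli argument that turns $q'_{\rm sh}<0$ into $M'_{\rm sh}<0$, and the resulting $S'_{\rm sh}>0$ all follow the paper's route. The gap is in the one substantive claim, $\operatorname{sgn}R'_{\rm sh}=\operatorname{sgn}\mathcal D(\phi')$. The structure you anticipate there is wrong, and the step that actually produces $\mathcal D$ is missing.

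The derivative $d\theta_{\rm sh}/ds$ is a rational function of $s$ and carries no factor $\sqrt{M_{\rm sh}^2-1}$. Only the Prandtl--Meyer term carries it, since $\nu'(M)\,dM=\sqrt{M^2-1}\,dq/q$ under Bernoulli. Carrying out the computation gives
\[
R'_{\rm sh}=\phi''(x)\,\frac{2sN(s)-4\sqrt{M_{\rm sh}^2-1}\,B(s)}{s^3(\gamma+1)^2(1+s^2)^2q_{\rm sh}^2},
\qquad
N=P+\epsilon(\gamma+1)(1+s^2),\quad B=\epsilon^2(1+s^2)^2+\gamma s^4 .
\]
Here $P$ is the polynomial with $q_{\rm sh}^2-c_{\rm sh}^2=P/(s^2(\gamma+1)(1+s^2))$. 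Moreover $\sqrt{M_{\rm sh}^2-1}=\sqrt{(\gamma+1)P/C}$, with $C(s)=[2\epsilon+(2\epsilon+\gamma-1)s^2][2\gamma s^2-\epsilon(\gamma-1)(1+s^2)]$, and this is not rational. So no clearing of positive denominators turns $R'_{\rm sh}$ into a positive multiple of a polynomial; $R'_{\rm sh}$ is only sign-equivalent to $\mathcal D$, not proportional to it.

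The missing idea is a squaring step. Both numerator terms are positive: $B>0$, and $N>0$, which is exactly $\theta'_{\rm sh}>0$. Hence the sign of their difference equals the sign of $s^2N^2-4(M_{\rm sh}^2-1)B^2$. Substituting $M_{\rm sh}^2-1=(\gamma+1)P/C$ and multiplying by $C$ (positive because $M_{n,\infty}>1$) yields precisely $\mathcal D=s^2CN^2-4(\gamma+1)B^2P$. This also explains the difference-of-squares form of $\mathcal D$.

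\textbf{Two earlier steps also need actual arguments rather than expectations.}

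For $q'_{\rm sh}<0$, the bounds $1<r<(\gamma+1)/(\gamma-1)$ together with $r$ increasing do not suffice. With $m=M_{n,\infty}^2$, the downstream normal contribution is $s^2/(r^2(1+s^2))=\epsilon((\gamma-1)m+2)^2/((\gamma+1)^2m)$. This term increases once $m>2/(\gamma-1)$, which does happen inside $\mathcal I_{\rm sup}$ (for $\gamma=1.4$, $M_\infty=5$, whenever $s>1/2$). Instead write $q_{\rm sh}^2=1-\epsilon m+\epsilon((\gamma-1)m+2)^2/((\gamma+1)^2m)$. Then $dq_{\rm sh}^2/dm=-4\epsilon(\gamma+m^{-2})/(\gamma+1)^2<0$, which closes this step.

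For $\theta'_{\rm sh}>0$, your claim that the sonic point precedes maximum deflection is true, but you should prove it rather than ``expect'' it. The direct computation gives $q_{\rm sh}^2\,d\theta_{\rm sh}/ds=2N/(s^2(\gamma+1)^2(1+s^2)^2)$ with $N=P+\epsilon(\gamma+1)(1+s^2)$, so $N>0$ wherever $P>0$. This same positivity of $N$ is what licenses the squaring step above.
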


The proof, including the explicit polynomial $\mathcal D$, is given in
Appendix~\ref{app:sign-regime}.

If, in addition, $\mathcal D(\phi'(x))<0$ on $[0,X_*]$, the preceding
lemma gives
\begin{equation}\label{eq:monotonicityofRshSsh}
R'_{\rm sh}<0<S'_{\rm sh}.
\end{equation}
Hence the post-shock trace in the phase plane,
\[
\Gamma_{\rm sh}
:=\{(R_{\rm sh}(x),S_{\rm sh}(x)):0\le x\le X_*\},
\]
has the graph representations
\[
\Gamma_{\rm sh}
=\{(R,\chi(R)):R_{\min}\le R\le R_{\max}\}
=\{(\sigma(S),S):S_{\min}\le S\le S_{\max}\},
\]
where
\[
\begin{aligned}
R_{\min}&:=R_{\rm sh}(X_*),& R_{\max}&:=R_{\rm sh}(0),\\
S_{\min}&:=S_{\rm sh}(0),& S_{\max}&:=S_{\rm sh}(X_*).
\end{aligned}
\]
The functions $\chi$ and $\sigma$ are $C^1$, strictly decreasing, and
mutually inverse; in particular,
\[
\chi'(R_{\rm sh}(x))
=\frac{S'_{\rm sh}(x)}{R'_{\rm sh}(x)}<0.
\]

The compact phase region introduced in Section~1 can therefore be written as
\begin{equation}\label{eq:phase-domain}
\begin{aligned}
\mathcal P
&=\{(R,S):R_{\min}\le R\le R_{\max},\ \chi(R)\le S\le S_{\max}\}\\
&=\{(R,S):S_{\min}\le S\le S_{\max},\ \sigma(S)\le R\le R_{\max}\}.
\end{aligned}
\end{equation}
See Figure \ref{fig:phase-domains}. It is determined entirely by the prescribed shock and incoming state.

\begin{figure}[htbp]
\centering
\begin{tikzpicture}[x=1.2cm,y=1.05cm,>=Stealth,font=\small,
  declare function={
    phaseShock(\t)=5.5-0.7*\t-(0.3/4.6)*\t*\t;
    phaseWedge(\t)=5.5-0.2*\t-(0.25/4.6)*\t*\t;
  }]
\pgfmathsetmacro{\phasePR}{3.8}
\pgfmathsetmacro{\phasePS}{3.2}
\pgfmathsetmacro{\phaseChiP}{(-0.7+sqrt(0.7*0.7+4*(0.3/4.6)*(5.5-\phasePR)))/(2*(0.3/4.6))}
\fill[black!5]
  plot[domain=0:4.6,variable=\t,samples=61] ({phaseShock(\t)},\t)
  --(5.5,4.6)--cycle;
\fill[blue!8]
  plot[domain=0:3.6,variable=\t,samples=49] ({phaseShock(\t)},\t)
  --(5.5,3.6)--cycle;
\fill[blue!20]
  plot[domain=0:3.6,variable=\t,samples=49] ({phaseShock(\t)},\t)
  --({phaseWedge(3.6)},3.6)
  plot[domain=3.6:0,variable=\t,samples=49] ({phaseWedge(\t)},\t)
  --cycle;
\fill[orange!35]
  plot[domain=\phaseChiP:\phasePS,variable=\t,samples=25] ({phaseShock(\t)},\t)
  --(\phasePR,\phasePS)--cycle;
\draw[->] (0,0)--(6.4,0) node[right] {$R$};
\draw[->] (0,0)--(0,5.15) node[above] {$S$};
\draw[densely dashed] (0,4.6)--(5.5,4.6);
\draw[densely dashed] (0,3.6)--(5.5,3.6);
\draw[densely dashed] (5.5,0)--(5.5,4.6);
\draw[very thick]
  plot[domain=0:4.6,variable=\t,samples=61] ({phaseShock(\t)},\t);
\draw[very thick,blue!65!black]
  plot[domain=0:4.6,variable=\t,samples=61] ({phaseWedge(\t)},\t);
\draw[thick,orange!80!black]
  (\phasePR,\phaseChiP)--(\phasePR,\phasePS)--({phaseShock(\phasePS)},\phasePS);
\fill (\phasePR,\phasePS) circle (1.6pt) node[above left] {$P$};
\fill (5.5,0) circle (1.6pt) node[below right] {$O^*$};
\node[left] at (0,4.6) {$S_{\max}$};
\node[left] at (0,3.6) {$S_0$};
\node[left] at (0,0) {$S_{\min}$};
\node[below] at (5.5,-0.2) {$R_{\max}$};
\node[align=center] at (1.35,1.85)
{$\Gamma_{\rm sh}$\\$R=\sigma(S)$};
\draw[->,thin] (2.2,1.98)--({phaseShock(2.25)},2.25);
\node[align=center,blue!65!black] at (7.05,2.1)
{$\Gamma_{\rm wg}$\\$R=\ell(S)$};
\draw[->,thin,blue!65!black] (6.2,2.1)--({phaseWedge(2.1)},2.1);
\node[font=\scriptsize] at (3.42,2.91) {$K(P)$};
\node at (4.59,1.65) {$\mathcal D_{S_0}$};
\node at (5.02,2.85) {$\Omega_{S_0}$};
\node at (4.95,4.14) {$\mathcal P$};
\end{tikzpicture}
\caption{Schematic of the phase regions with curved shock and wedge
boundaries. The characteristic triangle $K(P)$ lies below and to the
left of $P$, between the shock and its horizontal and vertical
sections. The shock-to-wedge region $\mathcal D_{S_0}$ is a subset of
$\Omega_{S_0}$.}
\label{fig:phase-domains}
\end{figure}

The physical position of each shock point prescribes the Cauchy data
\begin{equation}\label{eq:phase-shock-data}
(x,y)(R_{\rm sh}(\xi),S_{\rm sh}(\xi))
=(\xi,\phi(\xi)),\qquad 0\le\xi\le X_*.
\end{equation}
Their tangential derivatives, together with
\eqref{eq:hodographsystemphase}, determine the derivatives of $x$ over $\Gamma_{\rm sh}$.

\begin{lemma}[Initial hodograph derivatives]\label{lem:initialsignsphase}
Assume \textup{(H1)}--\textup{(H2)}. Then
\begin{equation}\label{eq:xRxSinitialphase}
\left.x_R\right|_{\Gamma_{\rm sh}}
=\frac{\phi'-\lambda_-}
{(\lambda_+-\lambda_-)R'_{\rm sh}}<0,
\qquad
\left.x_S\right|_{\Gamma_{\rm sh}}
=\frac{\lambda_+-\phi'}
{(\lambda_+-\lambda_-)S'_{\rm sh}}>0.
\end{equation}
\end{lemma}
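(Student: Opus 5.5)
Differentiating the Cauchy data \eqref{eq:phase-shock-data} along the shock and combining with \eqref{eq:hodographsystemphase} gives a $2\times2$ linear system for $x_R,x_S$ on $\Gamma_{\rm sh}$. The formulas then follow by Cramer's rule, and the signs come from Lemma~\ref{lem:MonotonicityofRS} together with a geometric comparison of the shock slope with the characteristic slopes.

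\textbf{Step 1 (the linear system).} Differentiate $x(R_{\rm sh}(\xi),S_{\rm sh}(\xi))=\xi$ and $y(R_{\rm sh}(\xi),S_{\rm sh}(\xi))=\phi(\xi)$ in $\xi$. This gives
\[
x_RR'_{\rm sh}+x_SS'_{\rm sh}=1,\qquad y_RR'_{\rm sh}+y_SS'_{\rm sh}=\phi'.
\]
Substituting $y_R=\lambda_+x_R$ and $y_S=\lambda_-x_S$ from \eqref{eq:hodographsystemphase} into the second equation gives
\[
\lambda_+x_RR'_{\rm sh}+\lambda_-x_SS'_{\rm sh}=\phi'.
\]
Set $a=x_RR'_{\rm sh}$ and $b=x_SS'_{\rm sh}$. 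Then $a+b=1$ and $\lambda_+a+\lambda_-b=\phi'$. By (H2), $\lambda_\pm$ are finite and $\lambda_+>\lambda_-$, so the system is uniquely solvable:
\[
a=\frac{\phi'-\lambda_-}{\lambda_+-\lambda_-},\qquad b=\frac{\lambda_+-\phi'}{\lambda_+-\lambda_-}.
\]
Dividing by $R'_{\rm sh}$ and $S'_{\rm sh}$ yields \eqref{eq:xRxSinitialphase}. Both are nonzero by \eqref{eq:monotonicityofRshSsh}, which uses $\mathcal D<0$ from (H1).

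Strictly, the hodograph inverse is defined only where $J\neq0$. The cleaner approach is to regard the two relations as the determination of the Cauchy data for \eqref{eq:hodographxequationphase} on $\Gamma_{\rm sh}$. Equivalently, one computes $J$ on the physical side from the traces, and the resulting nonzero values confirm invertibility a posteriori.

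\textbf{Step 2 (signs).} Since $R'_{\rm sh}<0<S'_{\rm sh}$ and $\lambda_+-\lambda_->0$, both claimed inequalities reduce to the single strict ordering
\[
\lambda_-<\phi'<\lambda_+ \quad\text{at the post-shock state on the shock.}
\]
The upper inequality $\phi'<\lambda_+$ is the main point. For a supersonic downstream state behind an attached compressive shock, the shock angle exceeds the downstream flow angle plus the downstream Mach angle. This is the Lax-type condition that the $\lambda_+$ characteristics from the post-shock side impinge on the shock.

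I would verify it directly. Write $\beta_s=\arctan s$ and $\delta=\theta_{\rm sh}$. The normal downstream Mach number is
\[
M_{n,2}=M_2\sin(\beta_s-\delta),
\]
and the Rankine--Hugoniot relations with compressive entropy admissibility give $M_{n,2}<1$. This is equivalent to $\beta_s-\delta<\theta_m$, i.e.\ $\phi'<\tan(\theta+\theta_m)=\lambda_+$. The tangent is monotone here because of the angle bounds in (H2).

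The lower inequality $\phi'>\lambda_-$ is easier. Since $M_{n,2}>0$, we have $\beta_s>\delta>\delta-\theta_m$, where $\beta_s>\delta$ holds because the flow turns toward the shock but not beyond it ($v_{\rm sh}/u_{\rm sh}<s$ from the explicit formulas, using $r>1$).

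I expect the main obstacle to be justifying $M_{n,2}<1$ cleanly from the parametrisation \eqref{eq:shockcondition}. I would first try the standard normal-shock identity
\[
M_{n,2}^2=\frac{(\gamma-1)M_{n,\infty}^2+2}{2\gamma M_{n,\infty}^2-(\gamma-1)},
\]
which is less than $1$ exactly when $M_{n,\infty}^2>1$, as assumed in \eqref{eq:shockcondition}. Failing that, I would argue via the mass flux $\rho u_n$ continuity and $r>1$.
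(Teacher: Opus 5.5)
Your proposal is correct and follows the paper's proof: it uses the same $2\times2$ system from differentiating the shock data, solves it by Cramer's rule, and reduces both signs to $\lambda_-<\phi'<\lambda_+$ via $0<\beta_s-\theta_{\rm sh}<\theta_{m,\rm sh}$ and the angle bounds of (H2), with $M_{n,2}<1$ justified in more detail through the normal-shock identity. One wording slip: your heuristic sentence says the shock angle \emph{exceeds} $\theta_{\rm sh}+\theta_{m,\rm sh}$, but what you actually prove, and need, is the reverse inequality $\beta_s<\theta_{\rm sh}+\theta_{m,\rm sh}$, i.e.\ $\phi'<\lambda_+$.
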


\begin{proof}
Differentiating \eqref{eq:phase-shock-data} with respect to $\xi$ and
using \eqref{eq:hodographsystemphase}, we obtain
\[
x_RR'_{\rm sh}+x_SS'_{\rm sh}=1,
\qquad
\lambda_+x_RR'_{\rm sh}+\lambda_-x_SS'_{\rm sh}=\phi'(\xi).
\]
Here the trace derivatives and shock quantities are evaluated at $\xi$.
Solving this system yields the formulas in
\eqref{eq:xRxSinitialphase}. For the compressive full Euler shock in the
forward characteristic regime, the downstream characteristic slopes obey
\begin{equation}\label{eq:differenceofthespeed}
\lambda_-<\phi'<\lambda_+
\qquad\text{on }\Gamma_{\rm sh}.
\end{equation}
Indeed, if $\beta=\arctan\phi'$ is the shock angle, conservation of
tangential velocity and compression give $\beta>\theta_{\rm sh}$, while
the downstream normal Mach number is below one. Therefore
$0<\beta-\theta_{\rm sh}<\theta_{m,\rm sh}$; the forward characteristic
regime allows us to take tangents without crossing a vertical direction.
Together with \eqref{eq:monotonicityofRshSsh}, this proves the asserted signs.
\end{proof}

To complete the interior formulation, we transform the entropy equation
in \eqref{eq:PotantialFlow}. Using \eqref{eq:hodographsystemphase}, it becomes
\begin{equation}\label{eq:entropyRS}
(v-u\lambda_-)x_S\Sigma_R
+
(u\lambda_+-v)x_R\Sigma_S
=0,
\end{equation}
with the prescribed values
\begin{equation}\label{eq:phase-entropy-data}
\Sigma(R_{\rm sh}(\xi),S_{\rm sh}(\xi))
=\Sigma_{\rm sh}(\xi)
:=\frac{p_{\rm sh}(\xi)}{\rho_{\rm sh}(\xi)^\gamma}.
\end{equation}
For a given pair \((x,y)\), this is a linear transport problem for
$\Sigma$. Once a positive $\Sigma$ is determined, the remaining state
variables are recovered by
\begin{equation}\label{eq:phase-thermodynamic-recovery}
\rho=\left(\frac{c^2}{\gamma\Sigma}\right)^{1/(\gamma-1)},
\qquad p=\Sigma\rho^\gamma.
\end{equation}

The hybrid model also satisfies mass conservation, as shown in
Lemma~\ref{lem:continuity-hybrid} in Appendix~\ref{app:mass-conservation}.
Using \eqref{eq:hodographsystemphase}, we rewrite it in the phase plane as
\begin{equation}\label{eq:massRS}
\partial_R\!\left[\rho(v-u\lambda_-)x_S\right]
+
\partial_S\!\left[\rho(u\lambda_+-v)x_R\right]
=0.
\end{equation}

It remains to express the unknown wedge in the $(R,S)$ coordinates.
Its phase image $\Gamma_{\rm wg}$ issues from
\[
O^*:=(R_{\max},S_{\min})=(R_{\rm sh}(0),S_{\rm sh}(0)).
\]
The slip condition is $dy=\tan\theta\,dx$ along this curve. Substituting
\eqref{eq:hodographsystemphase} yields
\begin{equation}\label{eq:presentationofWedgeinPhaseplane}
(\tan\theta-\lambda_-)x_S\,dS
-
(\lambda_+-\tan\theta)x_R\,dR
=0.
\end{equation}
On the branch $x_R<0<x_S$, the ordering
$\lambda_-<\tan\theta<\lambda_+$ allows us to write
$\Gamma_{\rm wg}$ as $R=\ell(S)$, where
\begin{equation}\label{eq:phase-wedge-ode}
\ell'(S)=
\left.\frac{(\tan\theta-\lambda_-)x_S}
{(\lambda_+-\tan\theta)x_R}\right|_{(\ell(S),S)}<0,
\qquad \ell(S_{\min})=R_{\max}.
\end{equation}
For any phase level $S_0\in(S_{\min},S_{\max}]$ reached by this curve,
the region between the shock and the wedge is
\begin{equation}\label{eq:DS0definition}
\mathcal D_{S_0}
:=
\{(R,S):\sigma(S)<R<\ell(S),\ S_{\min}<S<S_0\}.
\end{equation}
See Figure \ref{fig:phase-domains}. The required ordering is
$\sigma(S)<\ell(S)<R_{\max}$ for $S_{\min}<S\le S_0$.
At $S_0=S_{\max}$, the upper edge
\[
\{(R,S_{\max}):R_{\min}\le R\le\ell(S_{\max})\}
\]
maps to the terminal $\lambda_+$-characteristic issuing from the prescribed
shock endpoint. This terminal edge is characteristic and carries no
additional boundary data. Along the wedge,
\[
\frac{d}{dS}x(\ell(S),S)
=\left.x_S\frac{\lambda_+-\lambda_-}
{\lambda_+-\tan\theta}\right|_{(\ell(S),S)}>0,
\]
so its physical image is a graph $y=f(x)$ whenever the hodograph map
remains non-degenerate. The wedge is therefore determined by
\eqref{eq:phase-wedge-ode} once the acoustic Cauchy problem has been solved.

In summary, solving the original hybrid problem
\eqref{eq:slip}--\eqref{eq:PotantialFlow} in the physical plane is locally
equivalent, through a non-degenerate hodograph transformation, to solving
the hodograph system \eqref{eq:hodographsystemphase}, the entropy equation
\eqref{eq:entropyRS}, and the wedge equation \eqref{eq:phase-wedge-ode}
in the phase plane, with the prescribed shock Cauchy data
\eqref{eq:phase-shock-data} and \eqref{eq:phase-entropy-data}.
By \eqref{eq:inverse-hodograph-jacobian} and assumption \textup{(H2)},
the key issue is to show that the signs $x_R<0<x_S$ established on the
shock are preserved throughout $\overline{\mathcal D_{S_{\max}}}$, so that
the inverse hodograph Jacobian remains nonzero. We carry out this
analysis and complete the physical reconstruction in
Section~\ref{sec:proof-main}.

\section{Proof of the main theorem}\label{sec:proof-main}

In this section, we first solve the linear hodograph
Cauchy problem and establish a quantitative condition preserving its
non-degeneracy. After constructing the local wedge and transported
entropy, we derive the shock-to-wedge mass bound and use it to continue
the reconstruction to the terminal characteristic.

For the coefficients of \eqref{eq:hodographxequationphase}, write
\begin{equation}\label{eq:ABdefinitionphase}
a(R,S):=\frac{\partial_R\lambda_-}{\lambda_+-\lambda_-},
\qquad
b(R,S):=\frac{\partial_S\lambda_+}{\lambda_+-\lambda_-}.
\end{equation}
Differentiating \eqref{eq:phase-acoustic-variables} gives
\begin{equation}\label{eq:firstderivativeslambda}
\partial_R\lambda_-=
\frac{(1+\lambda_-^2)(\gamma+1)M^2}{4(M^2-1)},\qquad
\partial_S\lambda_+=
\frac{(1+\lambda_+^2)(\gamma+1)M^2}{4(M^2-1)}.
\end{equation}
Both expressions are positive for $M>1$. Together with \textup{(H2)}, this yields
\begin{equation}\label{eq:ABpositivephase}
\lambda_+-\lambda_->0,\qquad a>0,\qquad b>0
\quad\hbox{on }\mathcal P.
\end{equation}
All three functions are smooth on a neighborhood of the compact set
$\mathcal P$. We first construct the hodograph pair on this whole set.

Write
\[
g(R):=x_R(R,\chi(R)),\qquad
h(S):=x_S(\sigma(S),S),
\]
where the right-hand sides are the prescribed expressions in
\eqref{eq:xRxSinitialphase}. They are continuous, with $g<0<h$.

\begin{lemma}[Hodograph Cauchy problem]
\label{lem:local-hodograph-cauchy}
Assume \textup{(H1)--(H2)}. Then the problem
\eqref{eq:hodographsystemphase} with the shock data
\eqref{eq:phase-shock-data}--\eqref{eq:xRxSinitialphase} admits a unique pair
$(x,y)\in C^1(\mathcal P)$.
\end{lemma}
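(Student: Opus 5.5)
We solve the Goursat–Cauchy problem for the first-order system satisfied by $(p,q):=(x_R,x_S)$ and then recover $x$ and $y$ by integration. Differentiating \eqref{eq:hodographsystemphase} compatibly shows that $p$ and $q$ satisfy
\[
p_S=q_R=a\,q-b\,p\qquad\text{on }\mathcal P,
\]
with $a,b$ as in \eqref{eq:ABdefinitionphase}. These are exactly the mixed-derivative relation \eqref{eq:hodographxequationphase} written as two transport equations along the coordinate lines. The data on the non-characteristic curve $\Gamma_{\rm sh}$ are $p=g(R)$ at $(R,\chi(R))$ and $q=h(S)$ at $(\sigma(S),S)$. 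Because $\chi$ is strictly decreasing, every horizontal and vertical segment from a point $P=(R,S)\in\mathcal P$ toward the shock meets $\Gamma_{\rm sh}$ exactly once, and it stays in $\mathcal P$. Its endpoints are $(\sigma(S),S)$ and $(R,\chi(R))$. This gives the integral formulation
\[
p(R,S)=g(R)+\int_{\chi(R)}^{S}(aq-bp)(R,t)\,dt,\qquad
q(R,S)=h(S)+\int_{\sigma(S)}^{R}(aq-bp)(r,S)\,dr,
\]
with both integrals taken over the characteristic triangle $K(P)$ of Figure~\ref{fig:phase-domains}.

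The first step is to solve this system by Picard iteration in $C^0(\mathcal P)$. Since $a,b$ are smooth and bounded on the compact set $\mathcal P$ by (H2) and \eqref{eq:ABpositivephase}, the $n$-th difference of iterates satisfies $|\delta_n(P)|\le C^n(\mathrm{area\ or\ length\ of\ }K(P))^{n}/n!$-type bounds. More concretely, $\|\delta_n\|\le (2L\,d)^n/n!$, where $L=\max(|a|+|b|)$ and $d$ is the diameter of $\mathcal P$. The iteration therefore converges uniformly, and the same Gronwall bound gives uniqueness of the continuous solution. Alternatively, one may use a weighted sup norm $e^{-\kappa(R_{\max}-R+S-S_{\min})}$ to obtain a contraction directly. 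The resulting $p,q$ are continuous; $p$ is differentiable in $S$ and $q$ in $R$, with $p_S=q_R$.

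The second step recovers $x$ and $y$. Because $p_S=q_R$ and $\mathcal P$ is simply connected (it is a curvilinear triangle), the form $p\,dR+q\,dS$ is closed in the weak sense. We define $x$ by integrating it from $O^*$ with $x(O^*)=0$. We then check consistency with \eqref{eq:phase-shock-data}: along $\Gamma_{\rm sh}$, the proof of Lemma~\ref{lem:initialsignsphase} gives $pR'_{\rm sh}+qS'_{\rm sh}=1$, so $x=\xi$ there. In the same way we set $y$ by integrating $\lambda_+p\,dR+\lambda_-q\,dS$. This form is closed because $\partial_S(\lambda_+p)-\partial_R(\lambda_-q)=(\partial_S\lambda_+)p-(\partial_R\lambda_-)q+(\lambda_+-\lambda_-)p_S=0$ by the equation. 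On the shock the second relation of that proof gives $dy=\phi'\,d\xi$, so $y=\phi(\xi)$.

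The main obstacle is regularity. The data $g,h$ are only continuous, since they involve $R'_{\rm sh},S'_{\rm sh}$, which need $\phi''$ and hence $\phi\in C^2$. Consequently $x\in C^1$ but not $C^2$, and the path-independence argument must be done with Green's theorem for continuous forms having the stated one-sided derivatives. I would handle this by mollifying $\phi$ (or the data $g,h$), obtaining smooth solutions, and passing to the limit using the uniform Picard estimates. Uniqueness then follows because any $C^1$ pair with the given data yields $(x_R,x_S)$ solving the integral system, which has a unique solution.
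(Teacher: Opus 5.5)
Your proposal is correct and follows essentially the same route as the paper. It uses the same Volterra system for $(x_R,x_S)$ along the vertical and horizontal segments in $K(P)$, solved by a fixed-point argument; the paper uses a Banach contraction in the weighted norm with factor $e^{-\lambda(R+S)}$, $\lambda=2(\|a\|_\infty+\|b\|_\infty)$, which is equivalent to your Picard factorial estimate. It then integrates the closed forms $p\,dR+q\,dS$ and $\lambda_+p\,dR+\lambda_-q\,dS$ from $O^*$ and checks the shock data by the chain rule, as you do.

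Two small corrections:
\begin{itemize}
\item Your alternative weight $e^{-\kappa(R_{\max}-R+S-S_{\min})}$ has the wrong sign in $R$. The horizontal integrals run from $\sigma(S)$ up to $R$, so the weight must decay in $R+S$, as in the paper.
\item No mollification is needed. Continuity of $p_S=q_R=aq-bp$ already gives exactness directly, e.g.\ set $x(R,S)=\xi_{\rm sh}(R)+\int_{\chi(R)}^S q(R,\eta)\,d\eta$ and differentiate under the integral sign.
\end{itemize}
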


\begin{proof}
Introduce
\[p=x_R, \quad z=x_S.\]  The problem
\eqref{eq:hodographsystemphase} with the shock data
\eqref{eq:phase-shock-data}--\eqref{eq:xRxSinitialphase} is equivalent to the integral system
\begin{equation}\label{eq:hodograph-volterra}
\begin{aligned}
p(R,S)&=g(R)+\int_{\chi(R)}^S
\bigl[a(R,\eta)z(R,\eta)-b(R,\eta)p(R,\eta)\bigr]\,d\eta,\\
z(R,S)&=h(S)+\int_{\sigma(S)}^R
\bigl[a(r,S)z(r,S)-b(r,S)p(r,S)\bigr]\,dr.
\end{aligned}
\end{equation}

Let $X=C(\mathcal P)\times C(\mathcal P)$ and define $\mathcal TU$ by the right-hand sides
of \eqref{eq:hodograph-volterra}. The continuity of $a,b,g,h,\chi,\sigma$
makes $\mathcal T$ a self-map of $X$. Set
$K=\|a\|_\infty+\|b\|_\infty>0$ and $\lambda=2K$,
and equip $X$ with the weighted norm
\[
\|U\|_\lambda
:=\sup_{(R,S)\in\mathcal P}e^{-\lambda(R+S)}
\max\{|p(R,S)|,|z(R,S)|\},\qquad U=(p,z).
\]
Since $\mathcal P$ is compact, this norm is equivalent to the supremum
norm, so $X$ remains complete. For $U,V\in X$,
\[
\begin{aligned}
e^{-\lambda(R+S)}|(\mathcal TU-\mathcal TV)_1(R,S)|
&\le K\|U-V\|_\lambda
\int_{\chi(R)}^S e^{-\lambda(S-\eta)}\,d\eta\\
&\le \frac12\|U-V\|_\lambda.
\end{aligned}
\]
The horizontal integral satisfies the same bound, with the exponential
$e^{-\lambda(R-r)}$. Hence
$\|\mathcal TU-\mathcal TV\|_\lambda\le\frac12\|U-V\|_\lambda$.
The Banach fixed-point theorem therefore gives a unique
$(p,z)\in C(\mathcal P)\times C(\mathcal P)$ solving \eqref{eq:hodograph-volterra}
on the whole of $\mathcal P$.

Moreover, the integral equations \eqref{eq:hodograph-volterra} imply
\[
p_S=z_R=az-bp.
\]
Together with \eqref{eq:ABdefinitionphase}, this shows that
$p\,dR+z\,dS$ and $\lambda_+p\,dR+\lambda_-z\,dS$ are closed.
Since $\mathcal P$ is simply connected, they admit $C^1$ primitives
$x,y$ satisfying $x_R=p$, $x_S=z$, and \eqref{eq:hodographsystemphase}.
With $(x,y)(O^*)=(0,0)$, the initial derivative formulas
\eqref{eq:xRxSinitialphase} imply the prescribed shock data
\eqref{eq:phase-shock-data}. Indeed, the chain rule and \eqref{eq:xRxSinitialphase} give
\[
\begin{aligned}
\frac{d}{d\xi}x(R_{\rm sh}(\xi),S_{\rm sh}(\xi))
&=pR'_{\rm sh}+zS'_{\rm sh}=1,\\
\frac{d}{d\xi}y(R_{\rm sh}(\xi),S_{\rm sh}(\xi))
&=\lambda_+pR'_{\rm sh}+\lambda_-zS'_{\rm sh}=\phi'(\xi).
\end{aligned}
\]
Integrating from $\xi=0$, where both the reconstructed and prescribed positions are $(0,0)$, yields \eqref{eq:phase-shock-data}.
\end{proof}

The Cauchy problem is thus solved independently of invertibility away
from the shock. Since $a,b>0$, the equation also gives
\begin{equation}\label{eq:hodograph-cross-monotonicity}
x_{RS}=a x_S-b x_R>0
\qquad\text{wherever }x_R<0<x_S.
\end{equation}
In this region, $x_S$ increases with $R$ from a positive value on the shock,
while $x_R$ increases with $S$ from a negative value on the shock. To prevent
the latter derivative from reaching zero, we use an integrating factor.
Define the integrating-factor weight over $\mathcal{P}$,
\begin{equation}\label{eq:phase-weight}
\mathcal A(R,S):=
\exp\!\left(\int_{\chi(R)}^S b(R,\tau)\,d\tau\right)a(R,S)
\end{equation}
and its maximum on each vertical phase section,
\begin{equation}\label{eq:phase-extrema}
\alpha(R):=\max_{\chi(R)\le S\le S_{\max}}\mathcal A(R,S).
\end{equation}
By (H1)--(H2), the function $\mathcal A$ is continuous and positive on $\mathcal P$. Thus, $\alpha$ is continuous and positive on
$[R_{\min},R_{\max}]$.

We now express the bounds $B_{\rm mass}$ and $B_{\rm nd}$ in \eqref{eq:bounds-intro} and \eqref{eq:nd-bound-intro} as functions of the shock phase coordinate $R$. Keeping the notation, set
\[
\xi_{\rm sh}(R):=R_{\rm sh}^{-1}(R)=x(R,\chi(R)).
\]
The non-degeneracy threshold then reads
\begin{equation}\label{eq:phase-nd-bound}
B_{\rm nd}(R):=\frac{-g(R)}{\alpha(R)}>0.
\end{equation}
For $S_{\min}<S_0\le S_{\max}$, let
\begin{equation}\label{eq:Omegasigmadefinition}
\Omega_{S_0}:=
\{(R,S):S_{\min}<S<S_0,\ \sigma(S)<R<R_{\max}\}.
\end{equation}

For $P=(R,S)\in\Omega_{S_0}$, define its closed characteristic
triangle by
\[
K(P):=\{(r,\eta):\sigma(S)\le r\le R,\quad
\chi(r)\le\eta\le S\}.
\]
The vertical and horizontal segments in
\eqref{eq:hodograph-volterra}, together with all their preceding
segments, are contained in $K(P)$. Introduce the distance condition
\begin{equation}\label{eq:xdistanceconditionphase}
x(R,S)-x(R,\chi(R))< B_{\rm nd}(R)
\end{equation}
and the region on which this condition holds throughout each such triangle,
\begin{equation}\label{eq:hereditary-region}
\Omega^*_{S_0}:=
\{P\in\overline{\Omega_{S_0}}:\eqref{eq:xdistanceconditionphase}
\text{ holds at every point of }K(P)\}.
\end{equation}
See Figure~\ref{fig:phase-domains}.
% At the shock the condition holds automatically, since $B_{\rm nd}>0$.
% This definition ensures that every point of $K(P)$ off the shock has
% the same preceding-section property.

\begin{proposition}[Preservation of the hodograph derivative signs]
\label{prop:nonvanishingxRxS}
Under \textup{(H1)--(H2)}, the solution of
Lemma~\ref{lem:local-hodograph-cauchy} satisfies
$x_R<0<x_S$ in $\Omega^*_{S_{\max}}$.
\end{proposition}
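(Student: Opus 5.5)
We argue by a continuity (first-failure) argument over the characteristic triangles, working with the formulation $p=x_R$, $z=x_S$, $p_S=z_R=az-bp$ of Lemma~\ref{lem:local-hodograph-cauchy}. Fix $P=(R,S)\in\Omega^*_{S_{\max}}$. Since every point of $K(P)$ again has its triangle inside $K(P)$, it suffices to show $p<0<z$ on all of $K(P)$. On the shock part of $K(P)$ this is Lemma~\ref{lem:initialsignsphase} ($g<0<h$). By continuity of $(p,z)$ on the compact set $K(P)$, if the signs fail somewhere, there is a \emph{first} point where they fail. To make this precise, order the points by $R+S$: choose $Q=(R_1,S_1)\in K(P)$ with $p<0<z$ on $K(Q)\setminus\{Q\}$ and $p(Q)=0$ or $z(Q)=0$. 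It remains to rule out each alternative.

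\textbf{Step 1 ($z$ cannot vanish first).} On $K(Q)\setminus\{Q\}$ we have $x_R<0<x_S$, so \eqref{eq:hodograph-cross-monotonicity} gives $z_R=az-bp>0$ there. Integrating along the horizontal segment from $(\sigma(S_1),S_1)$ to $Q$ then yields
\[
z(Q)>h(S_1)>0 .
\]
Hence the failure must come from $p(Q)=0$.

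\textbf{Step 2 ($p$ cannot vanish first: integrating factor).} Along the vertical segment $r=R_1$, $\eta\in[\chi(R_1),S_1]$, the equation $p_S+bp=az$ is solved with the weight $E(\eta)=\exp\int_{\chi(R_1)}^{\eta}b(R_1,\tau)\,d\tau$:
\[
E(S_1)\,p(Q)=g(R_1)+\int_{\chi(R_1)}^{S_1}\mathcal A(R_1,\eta)\,z(R_1,\eta)\,d\eta .
\]
Using $z>0$ on the segment (with $z\ge0$ at the endpoint) together with the definition \eqref{eq:phase-extrema}, the integral is at most
\[
\alpha(R_1)\int_{\chi(R_1)}^{S_1}z(R_1,\eta)\,d\eta
=\alpha(R_1)\bigl[x(Q)-x(R_1,\chi(R_1))\bigr].
\]
Since $Q\in K(P)$ and $P\in\Omega^*_{S_{\max}}$, the distance condition \eqref{eq:xdistanceconditionphase} holds at $Q$. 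Therefore this quantity is strictly less than $\alpha(R_1)B_{\rm nd}(R_1)=-g(R_1)$, and so $E(S_1)\,p(Q)<0$. This contradicts $p(Q)=0$, which completes the argument.

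\textbf{Main obstacle.} The delicate point is making the ``first failure point'' rigorous. The sign set $\{p<0<z\}$ must be shown to be relatively open and closed along the domain-of-dependence ordering, and the strict inequalities must survive at the boundary point $Q$. I would handle this by introducing the supremum $t^*$ of the values $t$ such that $p<0<z$ on $\{(r,\eta)\in K(P): r+\eta\le t\}$. At $t^*$, continuity gives $p\le0\le z$ on the closed set, and the two strict estimates above, which need only these non-strict signs on $K(Q)$, show that $p<0<z$ at level $t^*$. Compactness then lets us push beyond $t^*$, a contradiction unless the whole of $K(P)$ has already been covered. The strict distance inequality in the definition of $\Omega^*_{S_0}$ is exactly what supplies the strictness in Step 2.
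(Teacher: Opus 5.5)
Your proposal is correct and follows essentially the same route as the paper. It takes a failure point $Q\in K(P)$ minimizing $R+S$, then forces $x_R(Q)<0$ via the integrating-factor identity on the vertical segment, using $\mathcal A\le\alpha$ and the strict distance condition at $Q$. It forces $x_S(Q)>0$ by integrating along the horizontal segment, which differs only cosmetically: you integrate $z_R=az-bp>0$ directly, whereas the paper uses the weight $e^{-\int a}$ and needs only $x_R\le 0$ there.
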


\begin{proof}
Fix $P\in\Omega^*_{S_{\max}}$. Suppose, for contradiction, that
\[
E:=\{(r,s)\in K(P):x_R(r,s)\ge0\ \text{or}\ x_S(r,s)\le0\}
\ne\varnothing.
\]
By the continuity of $x_R$ and $x_S$, $E$ is compact. Choose any $Q=(R^q,S^q)\in E$ minimizing
$r+s$ over $E$. By (H1)--(H2), Lemma~\ref{lem:initialsignsphase} implies that $Q$ lies off the
shock. Both vertical and horizontal segments from the shock $\Gamma_{\rm sh}$ to $Q$ lie in
$K(P)$, and every point on either segment other than $Q$ has coordinate
sum less than $R^q+S^q$. By minimality, none of these points belongs to
$E$, so $x_R<0<x_S$ there.

On the one hand, at $Q$, continuity gives $x_R\le0\le x_S$;
since $Q\in E$, at least one derivative vanishes.

On the other hand, by \eqref{eq:hodographxequationphase}, we have
\begin{equation}\label{eq:exponentialform}
\begin{aligned}
\partial_S\left(x_R(R^q,S)e^{\int_{\chi(R^q)}^S b(R^q,\tau)\,d\tau}\right)
&=a(R^q,S)x_S(R^q,S)e^{\int_{\chi(R^q)}^S b(R^q,\tau)\,d\tau},\\
\partial_R\left(x_S(R,S^q)e^{-\int_{\sigma(S^q)}^R a(\tau,S^q)\,d\tau}\right)
&=-b(R,S^q)x_R(R,S^q)e^{-\int_{\sigma(S^q)}^R a(\tau,S^q)\,d\tau}.
\end{aligned}
\end{equation}
Integrating along the vertical and horizontal segments
\[
\{(R^q,\eta):\chi(R^q)\le\eta\le S^q\},\qquad
\{(r,S^q):\sigma(S^q)\le r\le R^q\},
\]
respectively, and using Lemma \ref{lem:initialsignsphase} together with $x_S\ge0$ on
this vertical segment and $x_R\le0$ on this horizontal segment, we obtain
\begin{equation}
    \begin{aligned}
        e^{\int_{\chi(R^q)}^{S^q} b(R^q,\tau)\,d\tau}x_R(Q)
&=g(R^q)+\int_{\chi(R^q)}^{S^q}\mathcal A(R^q,\eta)x_S(R^q,\eta)\,d\eta\\
&\le g(R^q)+\alpha(R^q)[x(Q)-x(R^q,\chi(R^q))],
\label{eq:vertical-factor-estimate}
    \end{aligned}
\end{equation}
and
\begin{equation}
    \begin{aligned}
e^{-\int_{\sigma(S^q)}^{R^q} a(r,S^q)\,dr}x_S(Q)
&=h(S^q)+\int_{\sigma(S^q)}^{R^q}
e^{-\int_{\sigma(S^q)}^r a(t,S^q)\,dt}[-b(r,S^q)x_R(r,S^q)]\,dr\\
&\ge h(S^q).
\label{eq:horizontal-factor-estimate}
\end{aligned}
\end{equation}
Furthermore, by $Q\in K(P)$ and  $P\in \Omega_{S_{\max}}^*$,  substituting \eqref{eq:xdistanceconditionphase} into  \eqref{eq:vertical-factor-estimate}, we obtain $x_R(Q)<0$, since $\alpha(R^q)>0$ and
\[
g(R^q)+\alpha(R^q)[x(Q)-x(R^q,\chi(R^q))]
<g(R^q)+\alpha(R^q)B_{\rm nd}(R^q)=0.
\] By Lemma \ref{lem:initialsignsphase}, \eqref{eq:horizontal-factor-estimate} implies $x_S(Q)>0$ directly. This contradicts the vanishing
of at least one derivative at $Q$. Hence $E$ is empty, which completes the proof.
\end{proof}

Under assumptions (H1) and (H2), by \eqref{eq:inverse-hodograph-jacobian}, the preceding proposition
ensures local invertibility in $\Omega^*_{S_0}$. To prove
Theorem~\ref{thm:main}, we next show that the wedge curve $R=\ell(S)$
in the phase plane can be extended over the entire interval
$[S_{\min},S_{\max}]$, with the corresponding flow region satisfying
$\mathcal D_{S_{\max}}\subset\Omega^*_{S_{\max}}$.

To this end, we use the following continuation property.

\begin{definition}[Admissible reconstruction up to a phase level]
\label{def:continuation-property}
For $S_{\min}<S_0\le S_{\max}$, the property $\mathsf H(S_0)$ means
that there is a unique function $\ell\in C^1([S_{\min},S_0])$ whose graph
solves \eqref{eq:presentationofWedgeinPhaseplane}, with
\[
\ell(S_{\min})=R_{\max},\qquad
\sigma(S)<\ell(S)<R_{\max},\quad \ell'(S)<0
\quad(S_{\min}<S\le S_0),
\]
and such that $x_R<0<x_S$ on ${\mathcal D_{S_0}}$, with
$\mathcal D_{S_0}$ as in \eqref{eq:DS0definition}.
We write $S=\omega(R)$ for the inverse of $R=\ell(S)$, defined on
$[\ell(S_0),R_{\max}]$.
\end{definition}

\begin{lemma}\label{lem:localH(S_0)}
Under \textup{(H1)--(H2)},
\eqref{eq:phase-wedge-ode} admits a unique solution
$\ell\in C^1([S_{\min},S_0])$  for some $S_{\min}<S_0\le S_{\max}$. Moreover, $\mathsf H(S_0)$ holds.
\end{lemma}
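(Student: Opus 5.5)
We prove Lemma~\ref{lem:localH(S_0)} by a local ODE argument at the corner $O^*$, followed by a continuity argument for the signs of $x_R$ and $x_S$ in the thin region between the shock and the wedge.

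\emph{Step 1: rewrite the wedge equation.} Let $F(R,S)$ denote the right-hand side of \eqref{eq:phase-wedge-ode}, built from the $C^1$ pair $(x,y)$ of Lemma~\ref{lem:local-hodograph-cauchy}. Then $x_R,x_S$ are continuous on $\mathcal P$, and the coefficients $\lambda_\pm$, $\theta$ are smooth functions of $(R,S)$.

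\emph{Step 2: signs at the corner.} At $O^*$, Lemma~\ref{lem:initialsignsphase} gives $x_R<0<x_S$. We also need the ordering $\lambda_-<\tan\theta<\lambda_+$, which holds at every supersonic state by (H2): it says $\theta-\theta_m<\theta<\theta+\theta_m$ with all angles in $(-\pi/2,\pi/2)$ and $\theta_m\in(0,\pi/2)$. By continuity there is a relatively open neighbourhood $U$ of $O^*$ in $\mathcal P$ on which $x_R<0<x_S$. On $U$ we have $F<0$, and $F$ is continuous and bounded.

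\emph{Step 3: existence near $O^*$.} Since the coefficients are only continuous, Peano's theorem gives a solution $\ell$ on some $[S_{\min},S_1]$ with $\ell(S_{\min})=R_{\max}$. The solution stays in $\mathcal P$: it is non-increasing in $R$, so it never leaves through the right edge $R=R_{\max}$. It also lies strictly to the right of the shock. Indeed, $\sigma'(S_{\min})=1/\chi'(R_{\max})$ is a finite negative number, while $\ell$ has finite slope $F(O^*)$. The comparison requires $\ell'(S_{\min})>\sigma'(S_{\min})$, i.e.
\[
F(O^*)\cdot\frac{R'_{\rm sh}(0)}{S'_{\rm sh}(0)}<1 .
\]
Substituting \eqref{eq:xRxSinitialphase}, this becomes
\[
\frac{(\tan\theta-\lambda_-)(\lambda_+-\phi')}{(\lambda_+-\tan\theta)(\phi'-\lambda_-)}<1,
\]
which is equivalent to $(\lambda_+-\lambda_-)(\tan\theta-\phi')<0$. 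This holds because $\tan\theta_{\rm sh}<\phi'$ by $\beta>\theta_{\rm sh}$ (the proof of Lemma~\ref{lem:initialsignsphase}). Hence $\sigma(S)<\ell(S)<R_{\max}$ and $\ell'<0$ for small $S-S_{\min}>0$.

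\emph{Step 4: uniqueness.} Uniqueness is the main obstacle: $F$ is only continuous, since $x_R,x_S\in C^0$. I would first try to show that $x_R,x_S$ are in fact $C^1$ along the transversal directions, or at least Lipschitz in $R$. From \eqref{eq:hodograph-volterra} we get $p_S=z_R=az-bp$, continuous, so $x_S$ is $C^1$ in $R$. In the equation $dR/dS=F(R,S)$, Lipschitz continuity is needed in $R$, and $\partial_R x_S$ is continuous. For $x_R$, write
\[
x_R(R,S)=g(R)+\int_{\chi(R)}^{S}(az-bp)\,d\eta .
\]
This is Lipschitz in $R$ provided $g$ and $\chi$ are. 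By (H1) and $\phi\in C^2$, $g$ involves $R'_{\rm sh}$, which contains $\phi''$, so $g$ is only $C^0$. If this fails, I would instead exploit the monotone structure. Alternatively, view the wedge as an integral curve of the slip direction field in the physical plane: there the field $(u,v)$ is $C^1$ where $J\neq0$, and since $T$ is a local $C^1$ diffeomorphism near $O^*$ off the corner, uniqueness transfers. The corner itself needs care and is where most of the work lies.

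\emph{Step 5: conclusion.} Choose $S_0$ so small that $\overline{\mathcal D_{S_0}}\subset U$. This gives $x_R<0<x_S$ on $\mathcal D_{S_0}$, and with Steps 3 and 4 yields $\mathsf H(S_0)$.
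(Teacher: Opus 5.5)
\textbf{Steps 1--3 and 5.} These agree with the paper: Peano at the corner, and the slope comparison $\sigma'(S_{\min})<\ell'(S_{\min})<0$ coming from $\tan\theta_{\rm sh}<\phi'$. Two small corrections. First, $O^*$ is a corner of $\mathcal P$, so Peano needs the right-hand side extended continuously across $\partial\mathcal P$ before your slope argument shows the solution enters $\mathcal P$. Second, the displayed condition in Step 3 should read $F(O^*)\,S'_{\rm sh}(0)/R'_{\rm sh}(0)<1$, since you multiplied by a negative quantity; the inequality you write after substitution is nevertheless the correct one.

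\textbf{The gap is Step 4.} Uniqueness is part of the statement and of $\mathsf H(S_0)$, and you leave it unproved. Your diagnosis is right: the right-hand side of \eqref{eq:phase-wedge-ode} carries the factor $1/x_R$, and $x_R$ need not be Lipschitz in $R$ because $g$ involves $\phi''$. But ``exploit the monotone structure'' is not an argument. The physical-plane route is only sketched, and you explicitly leave undone the corner at the origin and the injectivity of $(x,y)$ near $O^*$.

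\textbf{The missing idea.} Change the dependent variable so that $x_R$ disappears from the equation. Since $x_R<0$ near $O^*$, the map $(R,S)\mapsto(S,x(R,S))$ can be inverted in $R$. This gives $R=\Phi(S,X)$ with $\Phi\in C^1$ and $\Phi_X=1/x_R$. Along any solution, $X(S):=x(\ell(S),S)$ satisfies $X'=x_R\ell'+x_S$. Substituting \eqref{eq:phase-wedge-ode}, the factor $x_R$ cancels:
\[
X'=\mathcal G(S,X):=\left.\frac{\lambda_+-\lambda_-}{\lambda_+-\tan\theta}\,x_S\right|_{R=\Phi(S,X)}.
\]

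\textbf{Finishing the argument.}
\begin{enumerate}
\item Only $x_S$ now appears. You already observed that $\partial_R x_S=z_R=az-bp$ is continuous.
\item Hence $\mathcal G_X=x_R^{-1}\partial_R\bigl(\frac{\lambda_+-\lambda_-}{\lambda_+-\tan\theta}x_S\bigr)\big|_{R=\Phi}$ is continuous, and bounded after shrinking $S_0$.
\item Each fixed-$S$ section is an interval, so $\mathcal G$ is Lipschitz in $X$ uniformly in $S$.
\item All solutions start from $X(S_{\min})=x(O^*)=0$, so Gr\"onwall gives a unique $X$.
\item Therefore $\ell(S)=\Phi(S,X(S))$ is unique.
\end{enumerate}
This handles the corner directly in the phase plane, with no need to pass to the physical plane or to establish injectivity there.
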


\begin{proof}
By Lemma~\ref{lem:local-hodograph-cauchy}, $(x,y)\in C^1(\mathcal P)$
and $x_{SR}$ is continuous. The strict shock signs $x_R<0<x_S$
persist near $O^*$, where the right-hand side of
\eqref{eq:phase-wedge-ode} is continuous. After a continuous extension
across the phase boundary, Peano's theorem gives a local $C^1$
solution $\ell$ with $\ell(S_{\min})=R_{\max}$.
The shock inequalities $\lambda_-<\tan\theta<\phi'<\lambda_+$,
Lemma~\ref{lem:initialsignsphase}, and \eqref{eq:phase-wedge-ode} give
\[
\sigma'(S_{\min})<\ell'(S_{\min})<0.
\]
Since $\ell(S_{\min})=\sigma(S_{\min})=R_{\max}$, choosing
$S_0>S_{\min}$ sufficiently close to $S_{\min}$ ensures
$\sigma<\ell<R_{\max}$ and $\ell'<0$ on $(S_{\min},S_0]$,
and $x_R<0<x_S$ on $\overline{\mathcal D_{S_0}}$.

To prove uniqueness, use $x_R<0$ to introduce the map
\[
(R,S)\longmapsto(S,X)=(S,x(R,S)),
\qquad R=\Phi(S,X),
\]
where $\Phi$ is its $C^1$ inverse in the $R$ variable near $O^*$.
Along a solution, $X(S)=x(\ell(S),S)$ satisfies
$X'=x_R\ell'+x_S$, so \eqref{eq:phase-wedge-ode} becomes
\[
X'=\mathcal G(S,X),\qquad
\mathcal G(S,X):=\left.
\frac{\lambda_+-\lambda_-}{\lambda_+-\tan\theta}x_S
\right|_{R=\Phi(S,X)}.
\]
Since $x_{SR}$ is continuous,
\[
\mathcal G_X
=\left.\frac{1}{x_R}\partial_R
\left(\frac{\lambda_+-\lambda_-}{\lambda_+-\tan\theta}x_S\right)
\right|_{R=\Phi(S,X)}\in C^0.
\]
After reducing $S_0$ if necessary, this derivative is bounded on the
image of $\mathcal P\cap\{S\le S_0\}$. Each section with $S$ fixed
is an interval, so $\mathcal G$ is Lipschitz in $X$ uniformly in $S$.
All solutions have $X(S_{\min})=x(O^*)=0$; Gr\"onwall's inequality
therefore gives uniqueness of $X$, and hence of $\ell=\Phi(S,X)$.
This proves $\mathsf H(S_0)$, including uniqueness.
\end{proof}

We investigate the monotonicity of entropy variables and density as follows.

\begin{proposition}\label{prop:entropy-density-monotonicity}
Assume \textup{(H1)--(H2)} and $\mathsf H(S_0)$. The entropy transported from the
shock is uniquely defined. Moreover, the flow satisfies
\begin{equation}\label{eq:09177}
    \Sigma_S<0,\qquad
\rho_S>0
\qquad\text{in }\mathcal D_{S_0}.
\end{equation}
\end{proposition}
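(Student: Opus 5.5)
The plan is to solve the transport equation \eqref{eq:entropyRS} by characteristics, show that the shock foot of the characteristic through a point moves monotonically in $S$, and then obtain the density sign from Bernoulli. On $\mathcal D_{S_0}$, $\mathsf H(S_0)$ gives $x_R<0<x_S$, so the inverse hodograph Jacobian \eqref{eq:inverse-hodograph-jacobian} does not vanish. By (H2), the coefficients of \eqref{eq:entropyRS} satisfy
\[
v-u\lambda_-=u(\tan\theta-\lambda_-)>0,\qquad u\lambda_+-v=u(\lambda_+-\tan\theta)>0 .
\]
Hence the characteristic field of \eqref{eq:entropyRS} is nondegenerate, with phase slope
\[
\frac{dS}{dR}=\frac{(\lambda_+-\tan\theta)x_R}{(\tan\theta-\lambda_-)x_S}<0 .
\]
These characteristics are exactly the phase images of streamlines. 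To get uniqueness I would work in the physical plane, where the streamline ODE $dy/dx=\tan\theta(R,S)$ has a $C^1$ right-hand side. This holds because $(R,S)$ is $C^1$ in $(x,y)$ through the local inverse of the $C^1$ map $(x,y)$.

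\textbf{Step 1 (well-posedness of the transported entropy).} Trace each streamline backward, i.e.\ in the direction of decreasing $x$, from a point of $\mathcal D_{S_0}$. It cannot leave through the wedge $R=\ell(S)$, since that curve is itself a streamline by \eqref{eq:presentationofWedgeinPhaseplane}, and ODE uniqueness forbids crossing it. It also cannot leave through the upper edge $S=S_0$, because $dS/dR<0$ and backward motion decreases $x$. Therefore it reaches $\Gamma_{\rm sh}$ at a unique foot $\xi(P)\in[0,X_*]$, and it crosses transversally because $\tan\theta_{\rm sh}<\phi'$ (as in the proof of Lemma~\ref{lem:initialsignsphase}). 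We then set $\Sigma(P)=\Sigma_{\rm sh}(\xi(P))$. The implicit function theorem at the transversal crossing gives $\xi\in C^1$, and uniqueness of $\Sigma$ follows from uniqueness of streamlines.

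\textbf{Step 2 (sign of $\Sigma_S$).} First I check that $\Sigma_{\rm sh}'(\xi)>0$. By (H1), $s=\phi'$ is increasing, so $M_{n,\infty}^2=s^2/(\epsilon(1+s^2))$ is increasing. The entropy jump $p_{\rm sh}/\rho_{\rm sh}^\gamma$ is strictly increasing in $M_{n,\infty}>1$ by the explicit formulas for $r$ and $p_{\rm sh}$; this is a routine computation. Next I show that $\xi$ is strictly decreasing in $S$ at fixed $R$. At fixed $R$, $S$ runs from $\chi(R)$ on the shock up toward the wedge, and the wedge streamline has foot $\xi=0$. Since streamlines do not cross, they are ordered, and their feet decrease monotonically toward the wedge. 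For strictness I would differentiate the foot map. Because $\Sigma$ is constant along characteristics, $\Sigma_R$ and $\Sigma_S$ are tied by \eqref{eq:entropyRS}, which forces
\[
\Sigma_R=-\frac{(u\lambda_+-v)x_R}{(v-u\lambda_-)x_S}\,\Sigma_S .
\]
The prefactor is positive, so $\Sigma_R$ and $\Sigma_S$ have the same sign. Evaluating at the shock foot, the tangential derivative $\Sigma_R R'_{\rm sh}+\Sigma_S S'_{\rm sh}=\Sigma_{\rm sh}'>0$ with $R'_{\rm sh}<0<S'_{\rm sh}$ determines that sign there. Combining the ordering with the constancy along characteristics then gives $\Sigma_S<0$ throughout.

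\textbf{Step 3 (density).} From Bernoulli, $c^2=\dfrac{2H}{M^2+2/(\gamma-1)}$ with $H$ constant. Since $M=\nu^{-1}((R-S)/2)$ and $\nu$ is increasing, we get $\partial_SM<0$ and hence $\partial_S c^2>0$. Then \eqref{eq:phase-thermodynamic-recovery} gives $\rho_S>0$, because $c^2$ increases and $\Sigma$ decreases in $S$.

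The main obstacle is making the sign of $\Sigma_S$ strict and consistent with the sign conventions. The cleanest route is probably the computation at the shock foot: $\Sigma_S$ there is $\Sigma'_{\rm sh}$ divided by a combination of $R'_{\rm sh}$, $S'_{\rm sh}$ and the characteristic slope. I must check that this combination is negative, using $R'_{\rm sh}<0<S'_{\rm sh}$ and $dS/dR<0$. I would then propagate the sign along the streamline through the flow map, which has nonvanishing Jacobian.
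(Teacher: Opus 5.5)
Your plan follows the paper's proof. You transport $\Sigma$ along the phase images of streamlines to a shock foot and use $\Sigma_{\rm sh}'>0$ from the Rankine--Hugoniot entropy jump. You then fix the sign of the foot derivative, propagate it with the positive derivative of the characteristic flow, and get $\rho_S>0$ from Bernoulli and \eqref{eq:phase-thermodynamic-recovery}. Getting uniqueness and $C^1$ dependence from the physical streamline ODE $dy/dx=\tan\theta$, instead of the paper's $(R,X)$ variables, is a valid substitute. Both devices avoid differentiating $x_S$ in $S$, which need not be possible because the shock data for $x_S$ involve $\phi''$.

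The step that does not close as you describe it is the strict sign at the foot, which you yourself flag. Your identity gives $\Sigma_S\,(S'_{\rm sh}-\mu R'_{\rm sh})=\Sigma'_{\rm sh}>0$ there, with $S'_{\rm sh}-\mu R'_{\rm sh}=R'_{\rm sh}(\chi'-\mu)$. The facts you propose to use, $R'_{\rm sh}<0<S'_{\rm sh}$ and $\mu<0$, make both $S'_{\rm sh}$ and $\mu R'_{\rm sh}$ positive, so they cannot decide the sign. Ordering of streamlines alone yields only $\Sigma_S\le 0$.

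The missing input is the oriented transversality $\mu<\chi'<0$ on $\Gamma_{\rm sh}$. This is the paper's \eqref{eq:09171}, and $\chi'-\mu>0$ is the denominator in \eqref{eq:SigmaStransport}. Inserting the shock values of $x_R,x_S$ from Lemma~\ref{lem:initialsignsphase} into $\mu$ gives
\[
\mu=\frac{(\lambda_+-\tan\theta)(\phi'-\lambda_-)}{(\tan\theta-\lambda_-)(\lambda_+-\phi')}\,\chi'
\qquad\text{on }\Gamma_{\rm sh}.
\]
The factor exceeds $1$ because $\lambda_-<\tan\theta_{\rm sh}<\phi'<\lambda_+$. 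So you must use the inequality $\tan\theta_{\rm sh}<\phi'$ with its orientation, not only to get a transversal crossing. Alternatively, combine your ordering bound $\Sigma_S\le0$ with the foot identity, which rules out $\Sigma_S=0$ on $\Gamma_{\rm sh}$ and so forces the combination to be negative.

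Strictness in the interior then follows from
\[
\Sigma_S(\tilde R,\tilde S)=\Sigma_S(r,\chi(r))\,\partial_{\tilde S}\mathscr S^{\tilde R,\tilde S}(r).
\]
Here $\partial_{\tilde S}\mathscr S^{\tilde R,\tilde S}>0$ because it equals $1$ at $R=\tilde R$ and never vanishes, as in \eqref{eq:09170}.
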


\begin{proof}
Under $\mathsf H(S_0)$, \eqref{eq:entropyRS} becomes
\[
\Sigma_R+\mu\Sigma_S=0,
\qquad
\mu:=\frac{(\lambda_+-\tan\theta)x_R}
{(\tan\theta-\lambda_-)x_S}<0.
\]
Denote the corresponding characteristic through $(\tilde R,\tilde S)\in\mathcal D_{S_0}$
by $S=\mathscr S^{\tilde R,\tilde S}(R)$, where
\begin{equation}\label{eq:entropycharacteristics}
\frac{d}{dR}\mathscr S^{\tilde R,\tilde S}(R)
=\mu(R,\mathscr S^{\tilde R,\tilde S}(R)),\qquad
\mathscr S^{\tilde R,\tilde S}(\tilde R)=\tilde S.
\end{equation}
Note that $x_S>0$ allows the change of variable
$X=x(R,S)$, with local inverse $S=\Psi(R,X)$.
Along a characteristic, $X'=x_R+\mu x_S$, so
\[
X'=G(R,X),\qquad
G(R,X):=\left.
\frac{\lambda_+-\lambda_-}{\tan\theta-\lambda_-}x_R
\right|_{S=\Psi(R,X)}.
\]
The continuous mixed derivative furnished by
Lemma~\ref{lem:local-hodograph-cauchy} gives the continuous derivative
\[
G_X(R,X)=\left.\frac{1}{x_S}\partial_S
\left(\frac{\lambda_+-\lambda_-}{\tan\theta-\lambda_-}x_R\right)
\right|_{S=\Psi(R,X)}.
\]
Thus the transformed equation, and hence
\eqref{eq:entropycharacteristics}, has a unique local solution
depending $C^1$ on the initial point.

By Lemma~\ref{lem:initialsignsphase} and \eqref{eq:phase-wedge-ode},
\begin{equation}\label{eq:09171}
    \mu<\chi'<0\quad\text{on }\Gamma_{\rm sh},
\end{equation}
and
\begin{equation}\label{eq:09172}
    \omega'=\mu\quad\text{on }\Gamma_{\rm wg}.
\end{equation}
Thus each characteristic, traced toward increasing $R$, extends
uniquely to $\Gamma_{\rm sh}$. Denote the intersection by
$(r(\tilde R,\tilde S),\chi(r(\tilde R,\tilde S)))$. It holds
\begin{equation}\label{eq:09176}
    \Sigma(\tilde R,\tilde S)=\Sigma_{\rm sh}(\xi(\tilde R,\tilde S)),
\end{equation}
where $\xi(\tilde R,\tilde S):=\xi_{\rm sh}(r(\tilde R,\tilde S))$ and we use the same symbol $\Sigma$ in both coordinate systems.

To determine the derivative sign, fix $\tilde R$ and set
$X(R)=x(R,\mathscr S^{\tilde R,\tilde S}(R))$ and
$V=\partial_{\tilde S}X$. Differentiating the transformed equation gives
\[
V'=G_X(R,X(R))V,\qquad V(\tilde R)=x_S(\tilde R,\tilde S)>0.
\]
Since $V=x_S(R,\mathscr S^{\tilde R,\tilde S}(R))
\partial_{\tilde S}\mathscr S^{\tilde R,\tilde S}(R)$, it follows that
\begin{equation}\label{eq:09170}
\partial_{\tilde S}\mathscr S^{\tilde R,\tilde S}(R)
=\frac{x_S(\tilde R,\tilde S)}
{x_S(R,\mathscr S^{\tilde R,\tilde S}(R))}
\exp\!\left(\int_{\tilde R}^R G_X(t,X(t))\,dt\right)>0.
\end{equation}
Writing $r=r(\tilde R,\tilde S)$ and differentiating
$\mathscr S^{\tilde R,\tilde S}(r)=\chi(r)$ therefore yields
\begin{equation}\label{eq:SigmaStransport}
r_{\tilde S}
=\frac{\left.\partial_{\tilde S}\mathscr S^{\tilde R,\tilde S}(R)\right|_{R=r}}
{\chi'(r)-\mu(r,\chi(r))}>0.
\end{equation}
Together with \eqref{eq:monotonicityofRshSsh}, this gives
\begin{equation}\label{eq:09174}
\xi_{\tilde S}=\frac{r_{\tilde S}}{R'_{\rm sh}(\xi)}<0.
\end{equation}

On the other hand, the Rankine--Hugoniot relations give
\[
\frac{d}{dM_{n,\infty}^2}
\log\frac{p_{\rm sh}}{\rho_{\rm sh}^{\gamma}}
=
\frac{2\gamma(\gamma-1)(M_{n,\infty}^2-1)^2}
{[2\gamma M_{n,\infty}^2-(\gamma-1)]
M_{n,\infty}^2[(\gamma-1)M_{n,\infty}^2+2]}
>0,
\]
while
\[
\frac{d}{dx}M_{n,\infty}^2
=
\frac{2\phi'\phi''}
{\epsilon[1+(\phi')^2]^2}
>0.
\]
Therefore,
\begin{equation}\label{eq:09175}
    \frac{d}{dx}\Sigma_{\rm sh}(x)>0.
\end{equation}

Differentiating \eqref{eq:09176} with respect to $\tilde{S}$ and substituting \eqref{eq:09175} and \eqref{eq:09174}, we obtain
\begin{equation}\label{eq:09178}
    \Sigma_S(\tilde R,\tilde S)=\Sigma'_{\rm sh}(\xi)\xi_{\tilde S}<0,
\end{equation}
which proves the first inequality in \eqref{eq:09177}. It remains to prove $\rho_S>0$ in $\mathcal{D}_{S_0}$.

To this end, by \eqref{eq:phase-acoustic-variables}, we have
\begin{equation}
M_S=-\frac{1}{2\nu'(M)}<0,
\end{equation}
which together with $c^2=\frac{\gamma-1+2\epsilon}{2+(\gamma-1)M^2},$ derived from the Bernoulli law, gives
\begin{equation}\label{eq:cSpositive}
    c_S>0.
\end{equation}
Hence \eqref{eq:phase-thermodynamic-recovery}, together with \eqref{eq:09178} and \eqref{eq:cSpositive}, yields
\[
\frac{\rho_S}{\rho}
=\frac{1}{\gamma-1}
\left(\frac{2c_S}{c}-\frac{\Sigma_S}{\Sigma}\right)>0,
\]
which completes the proof.
\end{proof}

Define
\begin{equation}\label{eq:beta-definition}
\beta(R)
:=
\min_{\chi(R)\le S\le S_{\max}}
\lambda_-^2(R,S).
\end{equation}
Recall
\begin{equation}\label{eq:phase-mass-bound}
B_{\rm mass}(R):=
\frac{\rho_\infty\phi(x(R,\chi(R)))}
{(\rho c)(R,\chi(R))\sqrt{1+\beta(R)}}.
\end{equation}
We now estimate the distance between the shock and the wedge in the phase
plane along the $S$-direction.

\begin{proposition}[Shock-to-wedge distance estimate]\label{pro:distanceofshcokandwedge}
Assume \textup{(H1)--(H2)} and $\mathsf H(S_0)$. Then \begin{equation}\label{eq:upperbound}
x(R,S)-x(R,\chi(R))
\le
B_{\rm mass}(R), \quad (R,S)\in\overline{\mathcal D_{S_0}}.
\end{equation}

\end{proposition}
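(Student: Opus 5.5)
We argue with mass conservation along the $\lambda_-$-characteristic $R=\text{const}$. Fix $R\in[\ell(S_0),R_{\max}]$ and $(R,S)\in\overline{\mathcal D_{S_0}}$. In the physical plane this phase segment $\{(R,\eta):\chi(R)\le\eta\le S\}$ is an arc $\Lambda$ of a $\lambda_-$-characteristic. Under $\mathsf H(S_0)$ the hodograph map is a diffeomorphism, so $\Lambda$ has slope $\lambda_-$. It starts at the shock point $(\xi,\phi(\xi))$, $\xi=x(R,\chi(R))$, and extends at most to the wedge point on $R=\ell^{-1}$, or rather to $(R,\omega(R))$. We first bound the mass flux through the full arc from the shock to the wedge, i.e.\ up to $\eta=\omega(R)$, and then restrict to the sub-arc, since $x$ increases in $\eta$ because $x_S>0$.

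\textbf{Step 1 (flux identity).} Apply the divergence form of mass conservation (Lemma~\ref{lem:continuity-hybrid}) on the physical region bounded by the upstream vertical line $x=0$ between $y=0$ and $y=\phi(\xi)$, the shock arc $\mathbf S\cap\{x\le\xi\}$, the arc $\Lambda$, and the wedge arc from $O$ to the endpoint of $\Lambda$. Rankine--Hugoniot continuity of mass flux lets us replace the shock by the upstream uniform state. The wedge is a streamline by \eqref{eq:slip} and carries no flux. The flux of $(\rho u,\rho v)$ through $\Lambda$ therefore equals the incoming flux through $\{0\}\times[0,\phi(\xi)]$, namely $\rho_\infty\phi(\xi)$. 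Equivalently, we can integrate \eqref{eq:massRS} over the phase region $\{\sigma(S')\le R'\le R,\ \chi(R')\le S'\le\omega(R)\}\cap\mathcal D$. We will do this in phase variables to avoid topological issues, using that $\Gamma_{\rm wg}$ has zero flux by \eqref{eq:presentationofWedgeinPhaseplane}.

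\textbf{Step 2 (lower bound on the flux density).} Along $\Lambda$, parametrized by $x$, the flux density is $\rho(u\lambda_--v)\,dx$ up to sign, i.e.\ $\rho(v-u\lambda_-)\,dx$. Writing $u=q\cos\theta$, $v=q\sin\theta$ and $\lambda_-=\tan(\theta-\theta_m)$ gives
\[
v-u\lambda_-=\frac{q\sin\theta_m}{\cos(\theta-\theta_m)}=c\sqrt{1+\lambda_-^2}.
\]
This identity is the key step, and it requires $\cos(\theta-\theta_m)>0$ from (H2). Hence
\[
\rho_\infty\phi(\xi)=\int_\Lambda \rho c\sqrt{1+\lambda_-^2}\,dx\ \ge\ \sqrt{1+\beta(R)}\int_{\xi}^{x(R,\omega(R))}\rho c\,dx,
\]
using the definition \eqref{eq:beta-definition} of $\beta$, which is taken over the whole vertical section of $\mathcal P$.

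\textbf{Step 3 (monotonicity of $\rho c$).} We need $\rho c\ge(\rho c)(R,\chi(R))$ along $\Lambda$. By Proposition~\ref{prop:entropy-density-monotonicity}, $\rho_S>0$, and by \eqref{eq:cSpositive}, $c_S>0$ in $\mathcal D_{S_0}$. Thus $\rho c$ increases in $S$ along $R=\text{const}$ and is minimized at the shock end. Combining gives
\[
x(R,\omega(R))-\xi\le\frac{\rho_\infty\phi(\xi)}{(\rho c)(R,\chi(R))\sqrt{1+\beta(R)}}=B_{\rm mass}(R),
\]
and monotonicity of $x$ in $S$ yields \eqref{eq:upperbound} for all $S\le\omega(R)$. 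Points of $\overline{\mathcal D_{S_0}}$ with $R<\ell(S_0)$ are cut off by the top edge $S=S_0$. For these we would use the truncated region and the flux across the upper $\lambda_+$-characteristic edge. Its sign is $\rho(u\lambda_+-v)x_R\,dR$, which is nonnegative in the correct orientation, so dropping it only strengthens the inequality.

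\textbf{Main obstacle.} The hardest part is Step 1: justifying the flux balance rigorously, including the correct orientation and sign of each boundary term, the handling of the top edge when $R<\ell(S_0)$, and the use of only $C^1$ regularity of $x$ (so that \eqref{eq:massRS} holds in a weak or integrated sense). I would first integrate \eqref{eq:massRS} directly over the phase domain by Green's theorem, verifying the sign of each edge via $x_R<0<x_S$ and $\lambda_-<\tan\theta<\lambda_+$.
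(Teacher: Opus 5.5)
\textbf{Same approach as the paper.} Your argument follows the paper's proof step by step:
\begin{enumerate}
\item integrate \eqref{eq:massRS} over the region cut off by the vertical $\lambda_-$-segment through $A=(R,\chi(R))$;
\item the wedge term vanishes by the slip condition, and the Rankine--Hugoniot mass relation turns the shock term into $\rho_\infty\phi(\xi)$;
\item when $R<\ell(S_0)$, the top-edge term can be dropped because $x_R<0$ and $u\lambda_+-v>0$;
\item conclude with $v-u\lambda_-=c\sqrt{1+\lambda_-^2}$, the definition of $\beta$, and $\rho_S>0$, $c_S>0$.
\end{enumerate}
Your remark that the identity for $v-u\lambda_-$ uses $\cos(\theta-\theta_m)>0$ from (H2) is a detail the paper leaves implicit.

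\textbf{An error in Step 1.} The phase region you write,
\[
\{\sigma(S')\le R'\le R,\ \chi(R')\le S'\le\omega(R)\}\cap\mathcal D,
\]
lies to the \emph{left} of the line $R'=R$. It is exactly the characteristic triangle $K(B)$ with $B=(R,\omega(R))$. Its boundary consists of the shock arc from $A$ to $(\sigma(\omega(R)),\omega(R))$, a horizontal $\lambda_+$-segment, and the vertical segment. The wedge is not on this boundary, and the shock arc it contains is the part with $x\ge\xi$. A balance over this set therefore involves incoming flux through the wrong portion of the shock, not $\rho_\infty\phi(\xi)$, so it is not the identity you need.

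\textbf{The fix.} Since $R_{\rm sh}$ is decreasing, the shock arc $\mathbf S\cap\{x\le\xi\}$ from your own physical description corresponds to $R'\in[R,R_{\max}]$. The correct region is therefore
\[
\{(R',S')\in\mathcal D_{S_0}:R'>R\},
\]
which is the paper's $\mathcal G_R$. It is bounded by:
\begin{itemize}
\item the shock arc from $O^*$ to $A$;
\item the vertical segment at $R'=R$;
\item the wedge arc;
\item when $R<\ell(S_0)$, the top edge $S'=S_0$.
\end{itemize}
With this region, your sign argument for the top edge goes through as stated.

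\textbf{A related point.} $\mathsf H(S_0)$ gives only $x_R<0<x_S$, i.e.\ local invertibility of the hodograph map. Global injectivity is established later, in the proof of Theorem~\ref{thm:main}. So the physical-plane version of your Step 1 is not yet justified at this stage. Carrying out the balance in the phase plane, as you propose, avoids this issue.
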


\begin{figure}[htbp]
\centering
\begin{tikzpicture}[
    x=1.02cm,y=0.91cm,
    >=Stealth,
    every node/.style={font=\small},
    guide/.style={densely dashed,gray!55,line width=0.4pt},
    shock/.style={black,line width=0.9pt},
    wedge/.style={blue!65!black,line width=0.9pt},
    cut/.style={orange!85!black,line width=1.15pt},
    point/.style={circle,fill=black,inner sep=1.35pt}
]
% The curves are drawn schematically as R = R(S), with both decreasing.
\begin{scope}
\pgfmathsetmacro{\Rcut}{4.3}
\pgfmathsetmacro{\Sshock}{(-0.6+sqrt(0.6*0.6+4*0.1375*(5.6-\Rcut)))/(2*0.1375)}
\pgfmathsetmacro{\Swedge}{(-0.25+sqrt(0.25*0.25+4*0.065*(5.6-\Rcut)))/(2*0.065)}
\fill[blue!9]
    plot[domain=0:\Sshock,samples=50,variable=\t]
        ({5.6-0.6*\t-0.1375*\t*\t},\t)
    -- (\Rcut,\Swedge)
    -- plot[domain=\Swedge:0,samples=50,variable=\t]
        ({5.6-0.25*\t-0.065*\t*\t},\t) -- cycle;
\draw[->,line width=0.65pt] (0.55,-0.2) -- (0.55,4.65) node[above] {$S$};
\draw[->,line width=0.65pt] (0.55,0) -- (6.15,0) node[right] {$R$};
\draw[guide] (0.55,4) -- (3.56,4);
\draw[guide] (3.56,0) -- (3.56,4);
\draw[guide] (\Rcut,0) -- (\Rcut,\Sshock);
\draw[shock] plot[domain=0:4,samples=80,variable=\t]
    ({5.6-0.6*\t-0.1375*\t*\t},\t);
\draw[wedge] plot[domain=0:4,samples=80,variable=\t]
    ({5.6-0.25*\t-0.065*\t*\t},\t);
\draw[cut] (\Rcut,\Sshock) -- (\Rcut,\Swedge);
\node[point] at (\Rcut,\Sshock) {};
\node[point] at (\Rcut,\Swedge) {};
\node[point] at (\Rcut,2.3) {};
\node[anchor=east] at (\Rcut-0.1,2.3) {$P$};
\node[point] at (5.6,0) {};
\node[anchor=east] at (\Rcut-0.09,\Sshock-0.03) {$A$};
\node[anchor=south west] at (\Rcut+0.04,\Swedge+0.06) {$B$};
\node[anchor=south west] at (5.61,0.08) {$O^*$};
\node[anchor=east] at (0.48,4) {$S_0$};
\node[anchor=east] at (0.48,0) {$S_{\min}$};
\node[below=5pt] at (3.56,0) {$\ell(S_0)$};
\node[below=5pt] at (\Rcut,0) {$R$};
\node[below=5pt] at (5.6,0) {$R_{\max}$};
\node[anchor=east] at (2.2,3.18) {$\Gamma_{\rm sh}$};
\node[anchor=west,text=blue!65!black] at (4.22,3.72) {$\Gamma_{\rm wg}$};
\node at (4.72,1.71) {$\mathcal G_R$};
\node[font=\small] at (3.28,-0.93) {(a) $R>\ell(S_0)$};
\end{scope}
\begin{scope}[xshift=7.6cm]
\pgfmathsetmacro{\Rcut}{2.5}
\pgfmathsetmacro{\Sshock}{(-0.6+sqrt(0.6*0.6+4*0.1375*(5.6-\Rcut)))/(2*0.1375)}
\fill[blue!9]
    plot[domain=0:\Sshock,samples=50,variable=\t]
        ({5.6-0.6*\t-0.1375*\t*\t},\t)
    -- (\Rcut,4) -- (3.56,4)
    -- plot[domain=4:0,samples=50,variable=\t]
        ({5.6-0.25*\t-0.065*\t*\t},\t) -- cycle;
\draw[->,line width=0.65pt] (0.55,-0.2) -- (0.55,4.65) node[above] {$S$};
\draw[->,line width=0.65pt] (0.55,0) -- (6.15,0) node[right] {$R$};
\draw[guide] (0.55,4) -- (\Rcut,4);
\draw[guide] (3.56,0) -- (3.56,4);
\draw[guide] (\Rcut,0) -- (\Rcut,\Sshock);
\draw[shock] plot[domain=0:4,samples=80,variable=\t]
    ({5.6-0.6*\t-0.1375*\t*\t},\t);
\draw[wedge] plot[domain=0:4,samples=80,variable=\t]
    ({5.6-0.25*\t-0.065*\t*\t},\t);
\draw[cut] (\Rcut,\Sshock) -- (\Rcut,4) -- (3.56,4);
\node[point] at (\Rcut,\Sshock) {};
\node[point] at (\Rcut,4) {};
\node[point] at (\Rcut,3.48) {};
\node[anchor=east] at (\Rcut-0.1,3.48) {$P$};
\node[point] at (3.56,4) {};
\node[point] at (5.6,0) {};
\node[anchor=east] at (\Rcut-0.09,\Sshock-0.03) {$A$};
\node[above=4pt] at (\Rcut,4) {$T$};
\node[above=4pt] at (3.56,4) {$W_0$};
\node[anchor=south west] at (5.61,0.08) {$O^*$};
\node[anchor=east] at (0.48,4) {$S_0$};
\node[anchor=east] at (0.48,0) {$S_{\min}$};
\node[below=5pt] at (\Rcut,0) {$R$};
\node[below=5pt] at (3.56,0) {$\ell(S_0)$};
\node[below=5pt] at (5.6,0) {$R_{\max}$};
\node[anchor=east] at (1.58,3.35) {$\Gamma_{\rm sh}$};
\node[anchor=west,text=blue!65!black] at (4.23,3.35) {$\Gamma_{\rm wg}$};
\node at (3.74,2.75) {$\mathcal G_R$};
\node[font=\small] at (3.28,-0.93) {(b) $R\le\ell(S_0)$};
\end{scope}
\end{tikzpicture}
\caption{Schematic integration regions $\mathcal G_R$ for the mass estimate.
In both panels, $A=(R,\chi(R))$.
In (a), the vertical segment ends at $B=(R,\omega(R))$ on the wedge.
In (b), it ends at $T=(R,S_0)$ and is closed by the horizontal segment
to $W_0=(\ell(S_0),S_0)$; when $R=\ell(S_0)$, $T=W_0$. The point $P=(R,S)$ lies on the vertical segment.}
\label{fig:mass-regions}
\end{figure}

\begin{proof}
Fix $P=(R,S)\in\mathcal D_{S_0}$ and set $A=(R,\chi(R))$.
We integrate \eqref{eq:massRS} over the portion of
$\mathcal D_{S_0}$ to the right of the vertical line $r=R$,
denoted by $\mathcal G_R$ in Figure~\ref{fig:mass-regions}.
The upper endpoint of this vertical section depends on whether
$R\ge\ell(S_0)$ or $R<\ell(S_0)$.
In both cases, the wedge contribution vanishes by
\eqref{eq:presentationofWedgeinPhaseplane}, while the
Rankine--Hugoniot relation
\[
\rho_{\rm sh}(u_{\rm sh}\phi'-v_{\rm sh})=\rho_\infty\phi'
\]
converts the shock contribution into the incoming mass flux.

\textit{Case 1: $R\ge\ell(S_0)$.}
The vertical section ends at $B=(R,\omega(R))$ on the wedge,
with $\omega(R)\le S_0$. The integration region is bounded by the
shock arc $O^*A$, the vertical segment $AB$, and the wedge arc
$BO^*$, as shown in Figure~\ref{fig:mass-regions}(a). Hence
\begin{equation}\label{eq:massconservationinRS}
\int_{\chi(R)}^{\omega(R)}
\rho(v-u\lambda_-)x_S(R,\eta)\,d\eta
=
\rho_\infty\phi(x(R,\chi(R))).
\end{equation}

\textit{Case 2: $R<\ell(S_0)$.}
The vertical section ends at $T=(R,S_0)$ on the upper phase level $S=S_0$.
Writing $W_0=(\ell(S_0),S_0)$, the boundary now consists of
the shock arc $O^*A$, the vertical segment $AT$, the horizontal
segment $TW_0$, and the wedge arc $W_0O^*$; see
Figure~\ref{fig:mass-regions}(b). Integration gives
\begin{equation}\label{eq:mass-terminal-section}
\begin{aligned}
\int_{\chi(R)}^{S_0}
\rho(v-u\lambda_-)x_S(R,\eta)\,d\eta
&=\rho_\infty\phi(x(R,\chi(R)))\\
&\quad+\int_R^{\ell(S_0)}
\rho(u\lambda_+-v)x_R(r,S_0)\,dr\\
&\le\rho_\infty\phi(x(R,\chi(R))).
\end{aligned}
\end{equation}
Indeed, $u\lambda_+-v=u(\lambda_+-\tan\theta)>0$ and $x_R<0$,
so the horizontal contribution is negative when $R<\ell(S_0)$.

Since $\rho(v-u\lambda_-)x_S>0$, truncating either vertical
integral at the given phase level $S$ yields
\begin{equation}\label{eq:091721}
    \int_{\chi(R)}^S
\rho(v-u\lambda_-)x_S(R,\eta)\,d\eta
\le\rho_\infty\phi(x(R,\chi(R))).
\end{equation}
By $\lambda_-=\tan(\theta-\theta_m)$, we have
\begin{equation}\label{eq:091720}
    v-u\lambda_-
=
c\sqrt{1+\lambda_-^2}.
\end{equation}
By Proposition~\ref{prop:entropy-density-monotonicity},
\[
\rho_S>0,\qquad c_S>0,
\]
and $x_S>0$ under $\mathsf H(S_0)$. Substituting \eqref{eq:091720} into \eqref{eq:091721} and using the density monotonicity
and the definition of $\beta$ give
\begin{equation}\label{eq:mass-integral-bound}
\begin{aligned}
    B_{\rm mass}(R)&\ge\int_{\chi(R)}^S
\frac{c(R,\eta)}{c(R,\chi(R))}\,x_S(R,\eta)\,d\eta\\&\ge\int_{\chi(R)}^S
x_S(R,\eta)\,d\eta\\&=x(R,S)-x(R,\chi(R)).
\end{aligned}
\end{equation}
That is \[ x(R,S)-x(R,\chi(R))\le B_{\rm mass}(R), \quad (R,S)\in \mathcal D_{S_0}.\]
This inequality extends to $\overline{\mathcal D_{S_0}}$ by continuity
of $c$ and $x_S$. We complete the proof.
\end{proof}

Now we are ready to prove the main theorem.

\begin{proof}[Proof of Theorem~\ref{thm:main}]
Set
\[
S^*:=\sup\{S_0\in(S_{\min},S_{\max}]:\mathsf H(S_0)\ \text{holds}\}.
\]
By Lemma~\ref{lem:localH(S_0)}, this set is nonempty and
$S^*>S_{\min}$. Suppose, for contradiction, that $S^*<S_{\max}$.
Since $\ell$ is decreasing, it has a limit at $S^*$, and the
ordering \eqref{eq:09171} excludes a first return to $\Gamma_{\rm sh}$.

By Proposition~\ref{pro:distanceofshcokandwedge} and
\eqref{eq:maincriteria}, we have
\[
x(R,S)-x(R,\chi(R))\le B_{\rm mass}(R)<B_{\rm nd}(R)
\qquad\text{on }\overline{\mathcal D_{S^*}}.
\]
Thus \eqref{eq:xdistanceconditionphase} holds throughout this region, and
\[\mathcal D_{S^*}\subset\Omega^*_{S^*},\] which together with
Proposition~\ref{prop:nonvanishingxRxS} gives
\[
x_R<0<x_S\qquad\text{on }\overline{\mathcal D_{S^*}}.
\]
Then, by the continuity of $x_R,x_S$ from
Lemma~\ref{lem:local-hodograph-cauchy}, the same local construction
as in Lemma~\ref{lem:localH(S_0)} extends the wedge to some
$S_1\in(S^*,S_{\max}]$ with $\mathsf H(S_1)$, a contradiction.
Hence $S^*=S_{\max}$. That is,
$\mathsf H(S_{\max})$ holds.

Recall the inverse hodograph transformation
\[
F:(R,S)\longmapsto(x(R,S),y(R,S)).
\]
By $\mathsf H(S_{\max})$,  the Jacobian of the inverse hodograph transformation
\[
\det \mathrm DF=(\lambda_--\lambda_+)x_Rx_S>0
\]
on $\overline{\mathcal D_{S_{\max}}}$. Hence $F$ is locally invertible
in the connected region $\overline{\mathcal D_{S_{\max}}}$.

We next prove $F$ is globally invertible in $\overline{\mathcal D_{S_{\max}}}$. To this end, by Lemma \ref{lem:MonotonicityofRS} and \eqref{eq:presentationofWedgeinPhaseplane}--\eqref{eq:phase-wedge-ode}, we get that $x$
increases with $S$ along both the shock and the wedge.
Together with $x_R<0$, this makes each nonempty level set
$x=x_0$ a connected graph $R=R(S)$, along which
\[
\frac{dy}{dS}=(\lambda_--\lambda_+)x_S<0.
\]
Here \eqref{eq:hodographsystemphase} is applied. Thus $F$ is globally one-to-one. Set
\[
\Omega^*:=F(\mathcal D_{S_{\max}}),
\qquad \mathbf W^*:=F(\Gamma_{\rm wg}).
\]
The shock data \eqref{eq:phase-shock-data} give
$F(\Gamma_{\rm sh})=\mathbf S^*$, and the image of $S=S_{\max}$
is the terminal $\lambda_+$-characteristic. The inverse $F^{-1}$,
the entropy supplied by Proposition~\ref{prop:entropy-density-monotonicity},
and the equivalence in
Section~\ref{sec:phase-reformulation} therefore define the required
$C^1$ physical flow throughout $\Omega^*$. The wedge image is a graph
$y=f(x)$ satisfying $f'=v/u$, so $f\in C^2$.
Uniqueness follows from Lemma~\ref{lem:local-hodograph-cauchy},
Proposition~\ref{prop:entropy-density-monotonicity}, and uniqueness
of the physical wedge streamline. The proof is complete.
\end{proof}

\section{Numerical Reconstruction and Model Comparison}\label{sec:numerical}

The numerical examples use the full Euler Rankine--Hugoniot relations to
generate the post-shock Cauchy data. {The interior flow and wedge are then
reconstructed using the hybrid model \eqref{eq:PotantialFlow} in the
\((R,S)\)-plane.} We examine baseline reconstructions and grid convergence,
Mach-number dependence, and comparison with full-Euler inverse reconstructions. {We use the normalization}
\[
    \gamma=1.4,\qquad
    \rho_\infty=1,\qquad
    u_\infty=1,\qquad
    v_\infty=0,\qquad
    p_\infty=\frac{1}{\gamma M_\infty^2}.
\]
For Cases 1--3, \(X_*=1\). Each reconstruction is therefore specified by
\(M_\infty\) and the prescribed shock \(\phi\).

\subsection{Numerical reconstruction scheme}

Starting from the prescribed shock, the shock relations following \eqref{eq:shockcondition} provide the full Euler post-shock trace. The corresponding flow variables and Riemann variables \(R\) and \(S\) are then obtained from \eqref{eq:defofflowstates} and \eqref{eq:defofRiemanninvariants}, while the shock derivatives \(R'_{\rm sh}\) and \(S'_{\rm sh}\) are evaluated as in Lemma~\ref{lem:MonotonicityofRS}.

These quantities determine the Cauchy data \eqref{eq:xRxSinitialphase} for the hodograph equation \eqref{eq:hodographxequationphase}. Solving this equation in the \((R,S)\)-plane and then using the wedge equation \eqref{eq:presentationofWedgeinPhaseplane} yields the reconstructed wedge, whose physical image is \((x_{\rm wg}(S),y_{\rm wg}(S))\).

{
\noindent\textbf{Discretization.}
For Cases 1--3, we divide the shock interval \([0,1]\) into \(N\) uniform subintervals,
with nodes
\[
x_k=\frac{k}{N},\qquad 0\leq k\leq N,
\]
and evaluate \(\phi\), \(\phi'\), and \(\phi''\) at these nodes. The
triangular phase grid is
\[
(R_i,S_j),\qquad 0\leq i\leq j\leq N,
\]
where \(R_i=R_{\rm sh}(x_i)\) and \(S_j=S_{\rm sh}(x_j)\). On the diagonal
\(i=j\), the shock data give \(x_{ii}=x_i\) and \(y_{ii}=\phi(x_i)\), while
\(x_R\) and \(x_S\) are initialized by \eqref{eq:xRxSinitialphase}.

The derivatives entering \(a\) and \(b\) are \(\partial_R\lambda_-\) and \(\partial_S\lambda_+\), respectively, and are evaluated by centered differences with step size \(10^{-6}\).
The variables \(x_R\) and \(x_S\) are advanced from adjacent phase-grid nodes using an Euler predictor and trapezoidal corrector, with at most four fixed-point sweeps and a stopping tolerance of \(10^{-12}\).
The physical coordinates are recovered by trapezoidal integration in both phase directions, using \eqref{eq:hodographsystemphase} for \(y\). The two path values are averaged.
Interpolation along each fixed \(S\)-row and from the shock-family
data to the wedge is piecewise linear in \(R\).} The wedge equation is advanced in \(S\) using
an Euler predictor and trapezoidal corrector.

An absolute tolerance of \(10^{-10}\) is used for interpolation-range checks, whereas phase-grid range checks use \(10^{-9}\). Wedge-ODE denominators with absolute value below \(10^{-14}\) are rejected.

To assess the reconstruction, we compute over all phase-grid nodes
\[
    \min(-x_R),\qquad
    \min x_S,\qquad
    \min J_{xy},\qquad
    \min(M-1),
\]
where
\[
    J_{xy}:=\frac{\partial(x,y)}{\partial(R,S)}
    =
    (\lambda_- -\lambda_+)x_Rx_S.
\]
The normalized error in the wedge boundary condition is defined by
\[
    E_{\rm wg}
    :=
    \max_{\Gamma_{\rm wg}}
    \frac{|v-u f'(x)|}{\sqrt{u^2+v^2}}.
\]
{The wedge derivative is approximated by centered differences at interior
physical nodes and one-sided differences at the endpoints. Both endpoints
are included in \(E_{\rm wg}\).}
To compare the reconstructed shock-to-wedge distance with the mass bound
and non-degeneracy threshold, let
\[
    d_{\rm sw}(R):=x(R,\omega(R))-x(R,\chi(R)).
\]
Using the mass bound \(B_{\rm mass}\) and the non-degeneracy threshold
\(B_{\rm nd}\) expressed in \eqref{eq:phase-mass-bound} and
\eqref{eq:phase-nd-bound}, respectively, we compute
\[
    \eta_{\rm mass}(R):=\frac{d_{\rm sw}(R)}{B_{\rm mass}(R)},
    \qquad
    \eta_{\rm th}(R):=\frac{B_{\rm mass}(R)}{B_{\rm nd}(R)},
    \qquad
    \eta_{\rm nd}(R):=\frac{d_{\rm sw}(R)}{B_{\rm nd}(R)}.
\]
{The inequality \(\eta_{\rm th}<1\) represents the a priori
criterion \eqref{eq:maincriteria} and is computable from the shock data.
In contrast, \(\eta_{\rm mass}\) and \(\eta_{\rm nd}\) depend on the
reconstructed shock-to-wedge distance and are therefore a posteriori quantities.}

For each fixed-\(R_i\) family,
the integral entering the weight defining \(\alpha\) is approximated by
the trapezoidal rule, and {\(\alpha\) and \(\beta\) are evaluated by discrete
maximization and minimization, respectively.}
The maxima of these ratios {are taken over all points except} the common vertex.

\subsection{Baseline reconstructions and grid convergence}

Cases 1 and 2 use the prescribed shock profiles \(\phi_1\) and \(\phi_2\),
respectively:
\[
    \phi_1(x)=0.50(e^x-1),
    \qquad
    \phi_2(x)=0.22(e^{1.5x}-1),
    \qquad
    0\leq x\leq1.
\]
Both profiles are strictly convex, remain inside the admissible shock-slope
interval, and satisfy \(\mathcal D(\phi')<0\) for \(M_\infty=5\).
Table~\ref{tab:numerical-profiles} summarizes Cases 1--3.
For \(M_\infty=5\), Table~\ref{tab:shock-admissibility} lists the
shock-admissibility quantities for Cases 1 and 2.

\begin{table}[htbp]
\centering
\caption{Shock profiles used in the numerical examples.}
\label{tab:numerical-profiles}
\begin{tabular}{ccl>{\raggedright\arraybackslash}p{0.36\linewidth}}
\toprule
Case & \(M_\infty\) & Shock profile & Role \\
\midrule
1 & \(5\) & \(\phi_1(x)=0.50(e^x-1)\) & {baseline reconstruction and grid refinement} \\
2 & \(5\) & \(\phi_2(x)=0.22(e^{1.5x}-1)\) & {second baseline reconstruction} \\
3 & \(5,10,20,50\) & same as Case 1 & Mach-number dependence and full-Euler comparison \\
\bottomrule
\end{tabular}
\end{table}

\begin{table}[htbp]
\centering
\caption{Shock admissibility quantities for \(M_\infty=5\).}
\label{tab:shock-admissibility}
\begin{tabular}{crrrrr}
\toprule
Case & \(\min\phi'\) & \(\max\phi'\) & \(\min(M_{\rm sh}-1)\)
& \(\max \mathcal D(\phi')\) & \(\max R'_{\rm sh}\) \\
\midrule
1 & 0.5000 & 1.3591 & 0.5282 & \(-2.7824{\times}10^{-2}\) & \(-2.4416{\times}10^{-1}\) \\
2 & 0.3300 & 1.4790 & 0.4220 & \(-3.0305{\times}10^{-4}\) & \(-1.2537{\times}10^{-1}\) \\
\bottomrule
\end{tabular}
\end{table}

Case 1 provides the baseline admissible reconstruction, and
{Figure~\ref{fig:caseA-phase}} illustrates the associated phase-plane geometry.
Case 2 has a smaller initial shock slope than Case 1, but a larger
exponential rate and endpoint slope. Its physical-plane reconstruction is shown in
{Figure~\ref{fig:caseB-physical-phase}}.

\begin{figure}[htbp]
\centering
\includegraphics[width=0.62\textwidth]{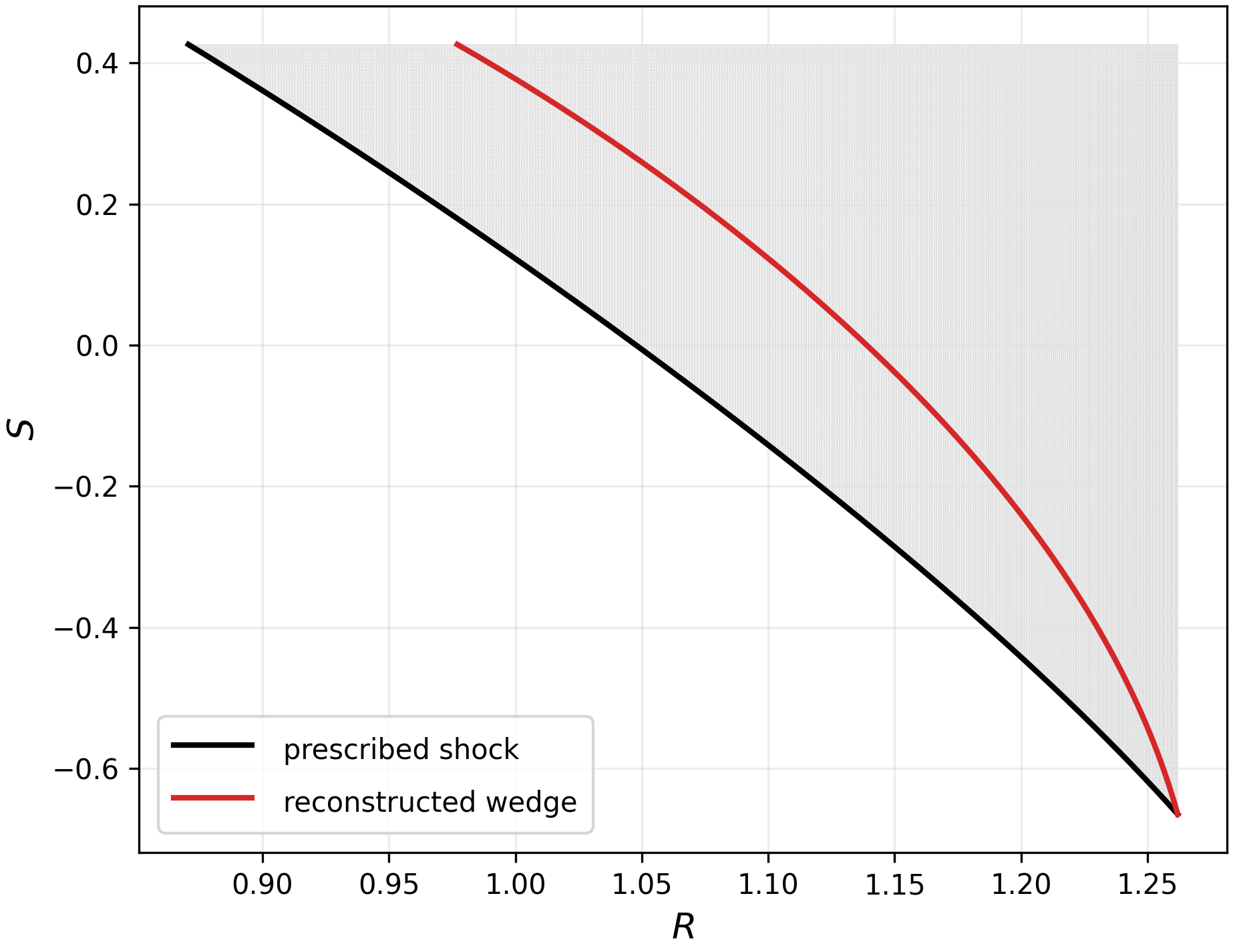}
\caption{Phase-plane geometry for Case 1: shock and wedge curves
\(\Gamma_{\rm sh}\) and \(\Gamma_{\rm wg}\) in the \((R,S)\)-plane.}
\label{fig:caseA-phase}
\end{figure}

\begin{figure}[htbp]
\centering
\includegraphics[width=0.62\textwidth]{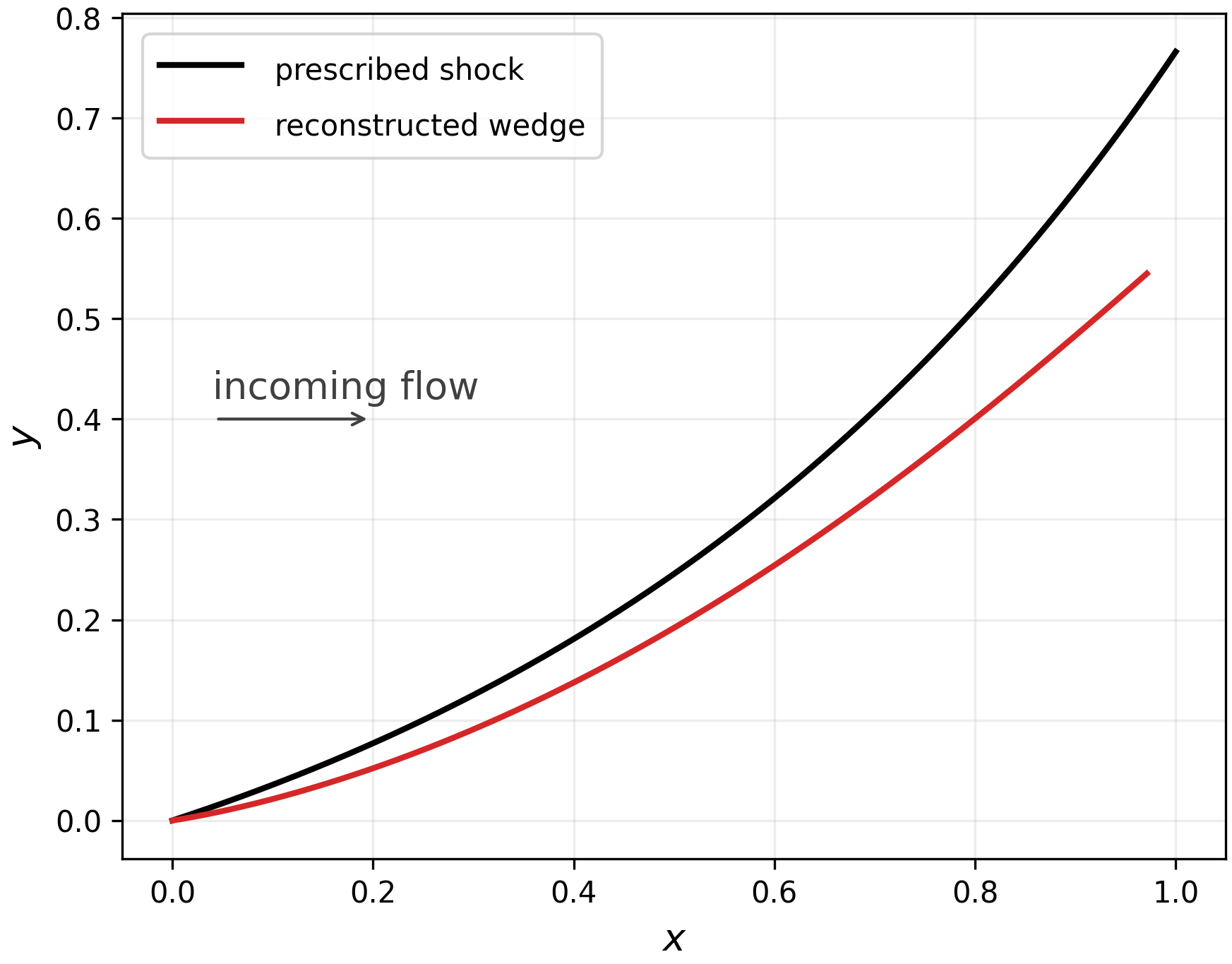}
\caption{Physical-plane reconstruction for Case 2.}
\label{fig:caseB-physical-phase}
\end{figure}

\begin{table}[htbp]
\centering
\caption{Reconstruction quantities for Cases 1 and 2 at \(N=400\).}
\label{tab:reconstruction-diagnostics}
{\setlength{\tabcolsep}{3.8pt}
\begin{tabular}{crrrrrrrr}
\toprule
Case & \(\min(-x_R)\) & \(\min x_S\) & \(\min J_{xy}\) & \(\min(M-1)\)
& \(\max\eta_{\rm mass}\) & \(\max\eta_{\rm th}\)
& {\(\max\eta_{\rm nd}\)} & \(E_{\rm wg}\) \\
\midrule
1 & 0.1857 & 0.2813 & 0.5327 & 0.5282 & 1.0002 & 0.1444
& {0.0751} & \(4.6426{\times}10^{-4}\) \\
2 & 0.0690 & 0.1582 & 0.2368 & 0.4220 & 0.9996 & 0.2002
& {0.0896} & \(5.2003{\times}10^{-4}\) \\
\bottomrule
\end{tabular}
}
\end{table}

The entries in Table~\ref{tab:reconstruction-diagnostics} are consistent with
the expected signs \(x_R<0\), \(x_S>0\), and \(J_{xy}>0\). The computed states
are also supersonic at all phase-grid nodes. {The small excess of
\(\max\eta_{\rm mass}\) above one in Case 1 is confined to the first
nonvertex point. A separate refinement check of \(\max\eta_{\rm mass}\)
at \(N=200,400,800\) shows that this excess decreases with mesh refinement.
The a posteriori non-degeneracy ratio \(\eta_{\rm nd}\), however,
remains well below one in both cases.}

\paragraph{Grid refinement for Case 1.}

We examine the grid convergence of the Case 1 reconstruction using
\[
    N=100,\quad 200,\quad 400,\quad 800.
\]
{An independently computed solution of the same scheme with
\(N_{\rm ref}=1600\) is used as the reference. To evaluate the errors,
the five wedge graphs are interpolated by a shape-preserving PCHIP
interpolant on their common physical interval \([x_a,x_b]\).
Both endpoints are retained, and the
\(L^2\) integral is evaluated by the composite trapezoidal rule.} We compute
\[
    E_{f,\infty}(N):=\|f_N-f_{\rm ref}\|_{L^\infty},
    \qquad
    E_{f,2}(N):=
    \left(
    \frac{1}{x_b-x_a}
    \int_{x_a}^{x_b}|f_N-f_{\rm ref}|^2\,{\rm d}x
    \right)^{1/2}.
\]
For either error quantity \(E\), the observed EOC is computed by
\[
    {\rm EOC}(N):=
    \frac{\log(E(N)/E(2N))}{\log 2}.
\]
The errors decrease under mesh refinement, and the observed EOCs are broadly
consistent with second-order {behavior} (Table~\ref{tab:convergence} and
{Figure~\ref{fig:convergence}}).
{This is an empirical convergence study rather than a proof of
second-order convergence.}

\begin{table}[htbp]
\centering
\caption{Grid convergence for Case 1 with \(N_{\rm ref}=1600\).}
\label{tab:convergence}
\begin{tabular}{crrrr}
\toprule
\(N\) & \(E_{f,\infty}\) & \({\rm EOC}_\infty\)
& \(E_{f,2}\) & \({\rm EOC}_2\) \\
\midrule
100 & \(1.2400{\times}10^{-5}\) & 1.7459 & \(6.8211{\times}10^{-6}\) & 1.9137 \\
200 & \(3.6971{\times}10^{-6}\) & 1.7401 & \(1.8104{\times}10^{-6}\) & 1.9424 \\
400 & \(1.1068{\times}10^{-6}\) & 2.1380 & \(4.7106{\times}10^{-7}\) & 2.3445 \\
800 & \(2.5145{\times}10^{-7}\) & -- & \(9.2752{\times}10^{-8}\) & -- \\
\bottomrule
\end{tabular}
\end{table}

\begin{figure}[htbp]
\centering
\includegraphics[width=0.62\textwidth]{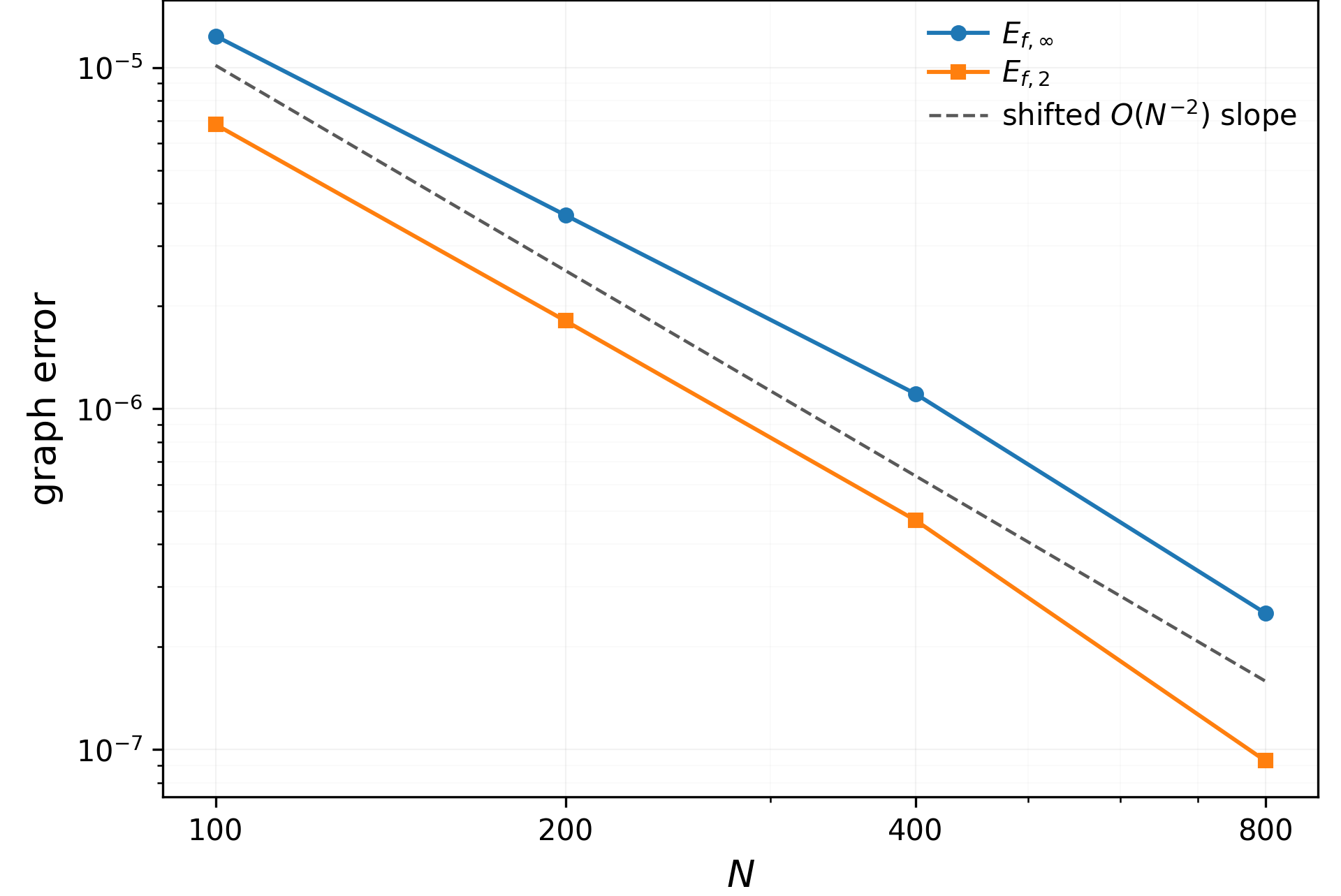}
\caption{Grid convergence for Case 1.}
\label{fig:convergence}
\end{figure}

\subsection{Mach-number dependence}
\label{sec:mach-number-dependence}

For Case 3, we fix the shock profile \(\phi_1\) from Case 1 and vary the
incoming Mach number:
\[
    M_\infty=5,\quad 10,\quad 20,\quad 50.
\]
The corresponding hybrid reconstructions with \(N=400\) are plotted in
{Figure~\ref{fig:mach-comparison}}.
To quantify the variation at higher Mach numbers, we compare the
\(M_\infty=10\) and \(20\) reconstructions with that for \(M_\infty=50\).
Denote by \(f_M\) the reconstructed wedge graph for incoming Mach
number \(M\). {After piecewise-linear interpolation over the pairwise
common physical intervals, we obtain}
\[
    \|f_{10}-f_{50}\|_{L^\infty}=1.8292\times10^{-2},
    \qquad
    \|f_{20}-f_{50}\|_{L^\infty}=3.9736\times10^{-3}.
\]
{Relative to the \(M_\infty=50\) reconstruction, the computed \(L^\infty\)
discrepancy is substantially smaller for \(M_\infty=20\) than for
\(M_\infty=10\).}

\begin{figure}[htbp]
\centering
\includegraphics[width=0.62\textwidth]{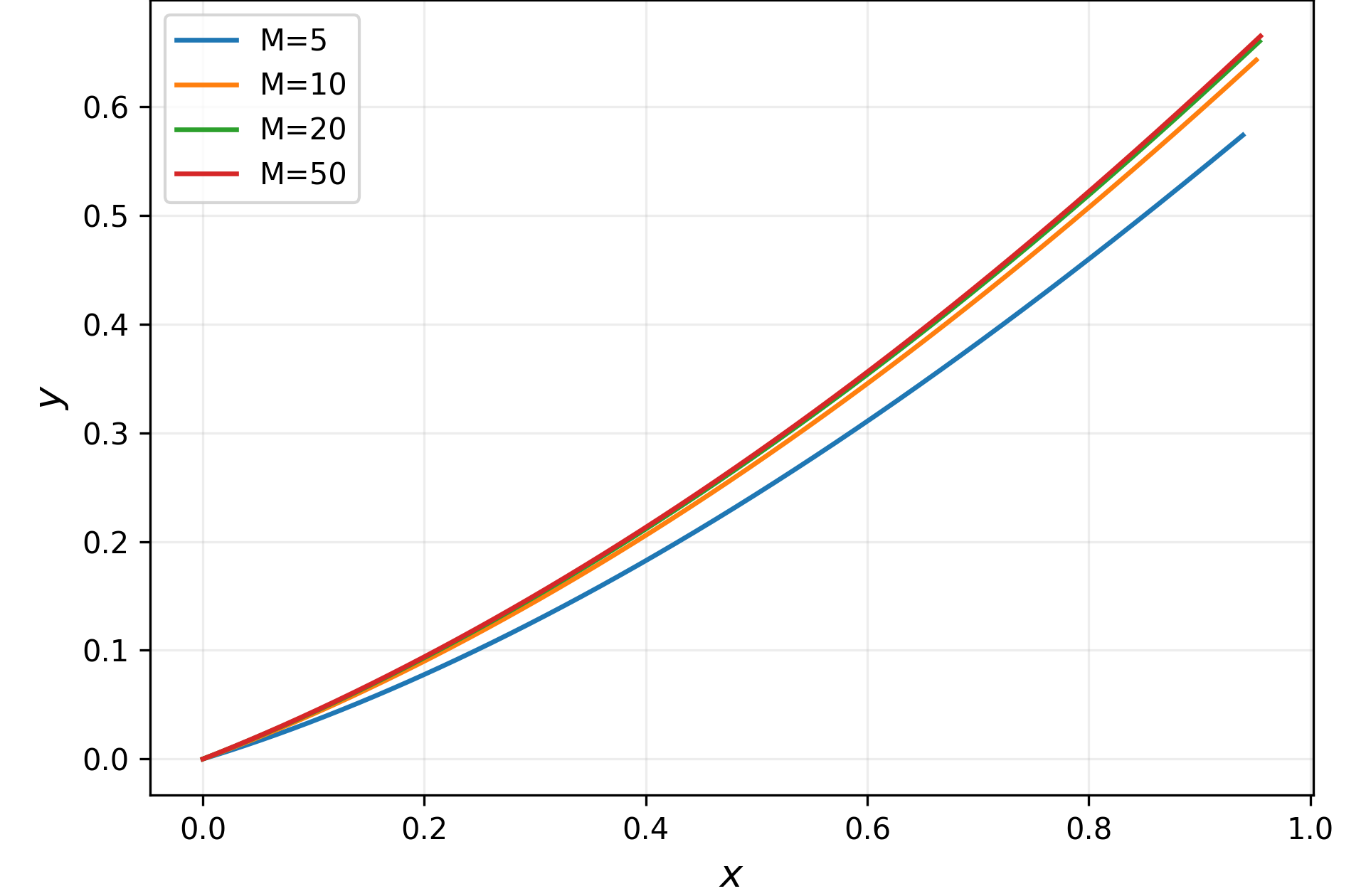}
\caption{Reconstructed wedges for Case 3 with \(M_\infty=5,10,20,50\).}
\label{fig:mach-comparison}
\end{figure}

\subsection{Comparison with full-Euler reconstruction}
\label{sec:hybrid-full-comparison}

{

For the prescribed shock \(\phi_1\) and the four incoming Mach numbers used
in Case 3, we compare the hybrid phase-plane reconstructions with full-Euler
inverse Cauchy reconstructions to quantify the discrepancy introduced by the
reduced model and to examine whether this discrepancy decreases as the
incoming Mach number increases. The two models use identical upstream data
at each Mach number. The full-Euler reconstruction uses the characteristic system
\eqref{eq:diagnalfullEuler}. In the full system, the wedge is recovered as a
material streamline up to its intersection with the terminal \(C_+\)
characteristic issuing from the endpoint of the prescribed shock.

Let \(f_{\rm hyb},f_{\rm full}\) denote the wedge graphs, with endpoint
abscissae \(X_{\rm hyb},X_{\rm full}\), and set
\(I_c=[0,\min(X_{\rm hyb},X_{\rm full})]\). We measure their relative
discrepancies by
\[
 E_\infty:=\frac{\|f_{\rm hyb}-f_{\rm full}\|_{L^\infty(I_c)}}
                    {\|f_{\rm full}\|_{L^\infty(I_c)}},\qquad
 E_2:=\frac{\|f_{\rm hyb}-f_{\rm full}\|_{L^2(I_c)}}
               {\|f_{\rm full}\|_{L^2(I_c)}},\qquad
 E_X:=\frac{|X_{\rm hyb}-X_{\rm full}|}{X_{\rm full}}.
\]
The first two quantities compare the wedge heights at the same physical
\(x\)-coordinate over the common interval \(I_c\), whereas \(E_X\)
measures the relative difference between the downstream endpoint abscissae.
The graphs are interpolated piecewise linearly on \(I_c\), and the
\(L^2\) integrals are evaluated by the trapezoidal rule.
The hybrid reconstructions use \(N=400\). The full-Euler
reconstructions use \(N=400\) at \(M_\infty=5,10\) and \(N=800\) at
\(M_\infty=20,50\). For the full-Euler reconstruction, we compute the
area-weighted RMS of the cellwise physical flux-balance defect divided by
cell area for mass, both momentum components, energy, and entropy transport.
The resolution is accepted when the largest of these five RMS residuals is
below \(10^{-2}\). Cells with area at most \(10^{-12}\) are excluded from
this calculation, and the excluded area fraction must remain below \(10^{-6}\).

For Case 1 at \(M_\infty=5\), the hybrid wedge lies above the
full-Euler wedge over most of the common physical interval \(I_c\),
with the largest separation at its downstream end
(Figure~\ref{fig:caseA-mach-ratio}). The maximum wall-height difference
on \(I_c\) is \(0.03704\). The hybrid reconstruction extends farther
downstream. The distinct endpoints reflect the different characteristic
evolution of the two models from the same terminal shock point.

\begin{figure}[htbp]
\centering
\includegraphics[width=0.62\textwidth]{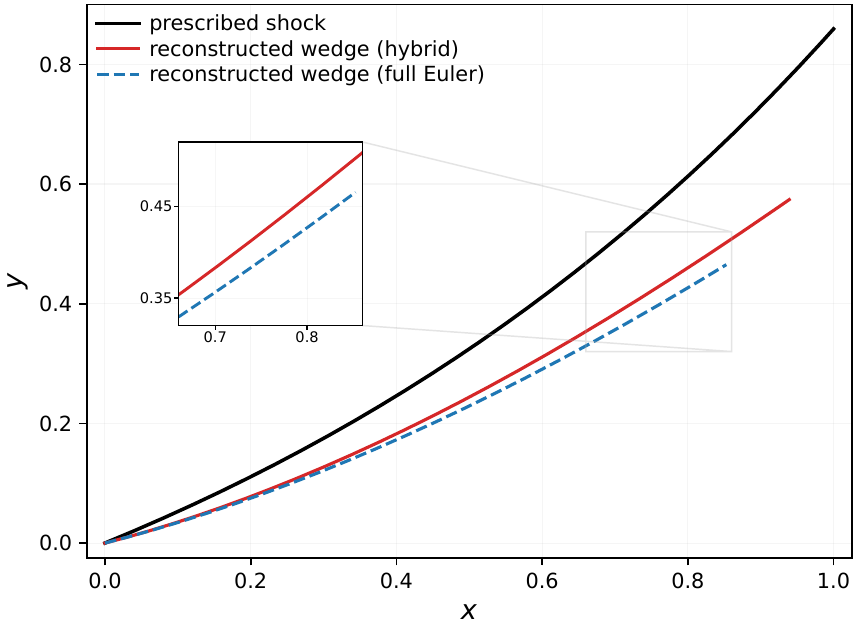}
\caption{Prescribed shock and hybrid/full-Euler reconstructed wedges for
Case 1 at \(M_\infty=5\). The inset enlarges their downstream
separation.}
\label{fig:caseA-mach-ratio}
\end{figure}

\begin{table}[htbp]
\centering
\caption{Hybrid/full-Euler discrepancies for \(\phi_1\).}
\label{tab:hybrid-full-mach}
{\setlength{\tabcolsep}{12pt}
\begin{tabular}{cccc}
\toprule
\(M_\infty\) & \(E_\infty\) & \(E_2\) & \(E_X\) \\
\midrule
5  & 0.079629 & 0.072480 & 0.101252 \\
10 & 0.081133 & 0.074386 & 0.100323 \\
20 & 0.079889 & 0.073273 & 0.099254 \\
50 & 0.079389 & 0.072803 & 0.098875 \\
\bottomrule
\end{tabular}
}
\end{table}

Across \(M_\infty=5\)--\(50\), the relative height discrepancies remain
close to \(8\%\) in \(L^\infty\) and \(7\%\) in \(L^2\), while the
endpoint discrepancy remains close to \(10\%\)
(Table~\ref{tab:hybrid-full-mach}). The weak variation of these discrepancies
over the sampled range indicates that the high-Mach stabilization observed
within the hybrid reconstructions in Case 3 does not, over this range,
correspond to convergence toward the full-Euler inverse reconstruction.
These are discrepancies between two inverse reconstructions, not an
independent full-Euler forward validation of the hybrid wedge.
}

{
\section{Conclusion}\label{sec:conclusion}

We have studied the inverse reconstruction of a wedge and its post-shock flow
from a prescribed leading shock and a uniform supersonic incoming state. The
full Euler Rankine--Hugoniot relations determine the post-shock Cauchy data,
while the interior reconstruction is governed by the reduced system considered
in this paper. Through the hodograph transformation, comparison of the
shock-to-wedge mass bound with the hodograph non-degeneracy threshold yields
an explicit sufficient criterion for preserving non-degeneracy throughout the
characteristic domain. Under the stated assumptions, this gives a global
one-to-one reconstruction with a $C^1$ post-shock flow and a $C^2$ wedge,
unique within the stated class of classical reconstructions.

The numerical results illustrate the reconstruction in both the phase and
physical planes and show empirical grid convergence with observed EOCs broadly
consistent with second-order behavior. Within the reduced model, the variation
among the reconstructed wedges decreases as the incoming Mach number increases.
This stabilization does not imply convergence toward the full-Euler
reconstruction. Indeed, comparisons based on the same prescribed shocks and
upstream data show a persistent and only weakly Mach-dependent discrepancy
over $M\_\infty=5$--$50$, thereby quantifying the effect of the reduced
interior model on the reconstructed wedge.

The present framework combines full Euler shock data with a reduced interior
system to obtain a mathematically tractable global reconstruction under an
explicit data-dependent criterion. A more systematic analysis of the
reduced/full-Euler modeling discrepancy, together with independent full-Euler
forward calculations for the reconstructed wedge, remains for future work.
}

\appendix

\section{Characteristic form of full Euler equations}\label{app:full-euler-characteristics}
\begin{lemma}
    For smooth supersonic full Euler states with the Bernoulli constant determined by the uniform incoming flow, the characteristic formulation is given by \cite{Courant1948,LiTsien1994}:
\begin{equation}\label{eq:diagnalfullEuler}
\left\{
\begin{aligned}
&R_x+\lambda_-R_y
=
\frac{\cos\theta_m}
{\gamma(\gamma-1)M^2\cos(\theta-\theta_m)}
\frac{\partial_n\Sigma}{\Sigma},
\\&
S_x+\lambda_+S_y
=
\frac{\cos\theta_m}
{\gamma(\gamma-1)M^2\cos(\theta+\theta_m)}
\frac{\partial_n\Sigma}{\Sigma},
\\&
\Sigma_x+\tan\theta\,\Sigma_y=0,
\\&
\frac{u^2+v^2}{2}
+\frac{\gamma p}{(\gamma-1)\rho}
=
\frac12+\frac{\gamma p_\infty}{(\gamma-1)\rho_\infty},
\end{aligned}
\right.
\end{equation}
where
\[
\partial_n:=-\sin\theta\,\partial_x+\cos\theta\,\partial_y
\]
is the derivative in the direction normal to the flow on its left.
Here, we use the notation introduced in
\eqref{eq:defofflowstates}--\eqref{eq:defofRiemanninvariants}.
\end{lemma}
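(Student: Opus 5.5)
The plan is to derive the characteristic form from the Euler equations written in primitive variables along streamlines. The Bernoulli relation is exact for steady smooth flow whose total enthalpy is constant on streamlines. It then holds globally because the incoming state is uniform and the normal flux across the shock is continuous, so total enthalpy is constant everywhere.

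First, I would record the two streamline equations. The energy equation together with mass conservation gives $u\Sigma_x+v\Sigma_y=0$, and dividing by $u>0$ gives the third line. Next I would work in the natural frame, with $\partial_s=\cos\theta\,\partial_x+\sin\theta\,\partial_y$ along the flow and $\partial_n$ as defined. The momentum and continuity equations become
\[
\rho q\,\partial_s q+\partial_s p=0,\qquad \rho q^2\,\partial_s\theta+\partial_n p=0,\qquad \frac{\partial_s\rho}{\rho}+\frac{\partial_s q}{q}+\partial_n\theta=0.
\]
Bernoulli fixes $c$ as a function of $q$. We then have $h=c^2/(\gamma-1)$ and $dh=dp/\rho+T\,dS_{\rm th}$, with $dS_{\rm th}\propto d\Sigma/\Sigma$. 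Hence I can write $\partial_n p=-\rho q\,\partial_n q-\frac{\rho c^2}{\gamma(\gamma-1)}\frac{\partial_n\Sigma}{\Sigma}$. Along streamlines $\Sigma$ is constant, so $\partial_s\rho=\partial_s p/c^2$.

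Substituting these relations produces the familiar pair
\[
\frac{M^2-1}{q}\partial_s q-\partial_n\theta=0,\qquad \partial_s\theta-\frac{1}{q}\partial_n q=\frac{1}{\gamma(\gamma-1)M^2}\frac{\partial_n\Sigma}{\Sigma}.
\]
Using $d\nu=\sqrt{M^2-1}\,dq/q$, I would form the combinations along the Mach directions $\partial_\pm=\cos\theta_m\partial_s\pm\sin\theta_m\partial_n$. Each combination, multiplied by a suitable factor, becomes a directional derivative of $\theta\pm\nu$ along the direction at angle $\theta\mp\theta_m$. Concretely, the $\theta+\nu$ combination is differentiated along $\theta-\theta_m$, with slope $\lambda_-$, and the $\theta-\nu$ combination along $\theta+\theta_m$, with slope $\lambda_+$. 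The source picks up the factor $\cos\theta_m$ from the weighting.

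The final step converts each directional derivative into the form $\partial_x+\lambda_\mp\partial_y$. This requires dividing by $\cos(\theta\mp\theta_m)$, which is positive under (H2), and that division yields the stated denominators.

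I expect the main obstacle to be bookkeeping. Three things must come out exactly: the signs, which determine which Riemann variable pairs with which characteristic family; the precise coefficient $\cos\theta_m/(\gamma(\gamma-1)M^2\cos(\theta\mp\theta_m))$; and the choice of normal orientation. I would check all three by hand against \cite{Courant1948,LiTsien1994}, which the statement cites. Apart from that, the derivation is standard and can be quoted from those references.
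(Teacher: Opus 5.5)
Your derivation is correct, and it is the classical streamline-coordinate argument of \cite{Courant1948,LiTsien1994}; the paper gives no proof of this lemma and simply cites those references. One small slip: $\partial_s\Sigma=0$ also needs the momentum equation, not just mass and energy. The bookkeeping you were worried about works out as you predict: write your pair as $\cot\theta_m\,\partial_s\nu-\partial_n\theta=0$ and $\partial_s\theta-\tan\theta_m\,\partial_n\nu=\frac{1}{\gamma(\gamma-1)M^2}\frac{\partial_n\Sigma}{\Sigma}$, multiply them by $\sin\theta_m$ and $\cos\theta_m$ respectively, and then add, or subtract the first from the second, to get $(\cos\theta_m\partial_s\mp\sin\theta_m\partial_n)(\theta\pm\nu)=\frac{\cos\theta_m}{\gamma(\gamma-1)M^2}\frac{\partial_n\Sigma}{\Sigma}$; since $\cos\theta_m\partial_s\mp\sin\theta_m\partial_n=\cos(\theta\mp\theta_m)(\partial_x+\lambda_\mp\partial_y)$ with the paper's orientation of $\partial_n$, this is exactly the stated system.
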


\section{Mass conservation for the hybrid model}\label{app:mass-conservation}

\begin{lemma}[Mass conservation for the hybrid model]\label{lem:continuity-hybrid}
Let $(\rho,u,v,p)$ be a $C^1$ supersonic state whose associated variables $R,S,\Sigma$ satisfy \eqref{eq:PotantialFlow}. Then
\[
(\rho u)_x+(\rho v)_y=0.
\]
\end{lemma}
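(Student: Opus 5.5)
We need to prove that $(\rho u)_x+(\rho v)_y=0$ follows from the hybrid system \eqref{eq:PotantialFlow}. The plan is to rewrite the continuity equation in the variables $(q,\theta,\Sigma)$, or equivalently $(R,S,\Sigma)$, and show that the combination of the two Riemann equations, together with entropy transport and Bernoulli, reproduces it exactly.

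\textbf{Step 1: logarithmic form.} Divide by $\rho q$ and write $u=q\cos\theta$, $v=q\sin\theta$. Then
\[
\frac{(\rho u)_x+(\rho v)_y}{\rho q}=\partial_s\log(\rho q)+\partial_n\theta ,
\]
where $\partial_s=\cos\theta\,\partial_x+\sin\theta\,\partial_y$ is the streamline derivative and $\partial_n$ is as in Appendix~\ref{app:full-euler-characteristics}. It therefore suffices to show $\partial_s\log(\rho q)+\partial_n\theta=0$.

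\textbf{Step 2: thermodynamic reduction.} Bernoulli makes $c$ a function of $q$ alone, with $c\,dc=-\tfrac{\gamma-1}{2}q\,dq$. By \eqref{eq:phase-thermodynamic-recovery},
\[
\log\rho=\frac{1}{\gamma-1}\bigl(2\log c-\log\Sigma-\log\gamma\bigr).
\]
The third equation of \eqref{eq:PotantialFlow} is $\partial_s\Sigma=0$ (since $u>0$), so
\[
\partial_s\log\rho=-\frac{q}{c^2}\,\partial_s q,\qquad
\partial_s\log(\rho q)=(1-M^2)\frac{\partial_s q}{q}.
\]
The target identity is thus
\[
(1-M^2)\frac{\partial_s q}{q}+\partial_n\theta=0,
\]
which is the classical isentropic potential-flow relation.

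\textbf{Step 3: from the Riemann equations.} The characteristic directions are $e_\pm=\cos\theta_m\,\partial_s\pm\sin\theta_m\,\partial_n$, with slopes $\lambda_\pm$. Hence the first two equations of \eqref{eq:PotantialFlow} say exactly
\[
(\cos\theta_m\,\partial_s-\sin\theta_m\,\partial_n)(\theta+\nu)=0,\qquad
(\cos\theta_m\,\partial_s+\sin\theta_m\,\partial_n)(\theta-\nu)=0.
\]
Subtracting gives $\cos\theta_m\,\partial_s\nu=\sin\theta_m\,\partial_n\theta$, that is, $\partial_n\theta=\sqrt{M^2-1}\,\partial_s\nu$.

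Next I will use the standard identity $d\nu=\sqrt{M^2-1}\,dq/q$. This follows from the definition of $\nu$ together with $dM/M=(1+\tfrac{\gamma-1}{2}M^2)\,dq/q$, and it is verified by a short differentiation that I regard as routine. It yields
\[
\partial_n\theta=(M^2-1)\frac{\partial_s q}{q},
\]
which is precisely the identity of Step 2.

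The only delicate point is that \eqref{eq:PotantialFlow} is not isentropic. I must check that the entropy enters only through $\log\rho$, and there it is annihilated by $\partial_s\Sigma=0$. Meanwhile $\nu$ and $c$ depend on $q$ alone via the global Bernoulli constant, so no $\Sigma$-gradient appears. Using $C^1$ regularity and $u>c>0$ to justify the division, the lemma follows.
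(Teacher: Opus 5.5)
Your proposal is correct and follows essentially the same route as the paper. Both arguments split the divergence into streamline and normal parts, and use $\partial_s\Sigma=0$ with Bernoulli to get $\partial_s\rho=-(\rho q/c^2)\partial_s q$. Both then subtract the two acoustic equations (after multiplying by $\cos(\theta\mp\theta_m)$) and use $d\nu=\sqrt{M^2-1}\,dq/q$ to obtain $\partial_n\theta=(M^2-1)\partial_s q/q$, so your division by $\rho q$ into logarithmic form is only a cosmetic difference from the paper's direct expansion.
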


\begin{proof}
Write
\[
\partial_{\parallel}:=\cos\theta\,\partial_x+\sin\theta\,\partial_y,
\qquad
\partial_{\perp}:=-\sin\theta\,\partial_x+\cos\theta\,\partial_y.
\]
The Bernoulli relation and the definition of the Prandtl--Meyer function give
\[
dR=d\theta+\frac{\sqrt{M^2-1}}{q}\,dq,
\qquad
dS=d\theta-\frac{\sqrt{M^2-1}}{q}\,dq.
\]
Multiply the two acoustic equations in \eqref{eq:PotantialFlow} by
$\cos(\theta-\theta_m)$ and $\cos(\theta+\theta_m)$, respectively,
and subtract. The resulting identity is
\[
\partial_{\perp}\theta
=\frac{M^2-1}{q}\,\partial_{\parallel}q.
\]
Since $\partial_{\parallel}\Sigma=0$ and
$c^2=\gamma\Sigma\rho^{\gamma-1}$, differentiation along the flow gives
\[
\frac{2c}{\gamma-1}\,\partial_{\parallel}c
=\frac{c^2}{\rho}\,\partial_{\parallel}\rho.
\]
Combining this with the Bernoulli relation yields
$\partial_{\parallel}\rho=-(\rho q/c^2)\partial_{\parallel}q$.
Consequently,
\begin{align*}
(\rho u)_x+(\rho v)_y
&=q\partial_{\parallel}\rho+\rho\partial_{\parallel}q
  +\rho q\partial_{\perp}\theta\\
&=\rho(-M^2+1+M^2-1)\partial_{\parallel}q=0.
\end{align*}
\end{proof}

\section{Admissibility of the prescribed shock slope}\label{app:admissible-slopes}

\begin{lemma}[Admissible supersonic shock slopes]\label{lem:admissibleslope}
Let $1<\gamma<3$, $0<\epsilon<1$, and $s>0$.
For the incoming state \eqref{eq:initialdata}, the compressive full Euler
post-shock state is entropy admissible and supersonic if and only if
$s\in\mathcal I_{\rm sup}$, where $\mathcal I_{\rm sup}$ is defined by
\begin{equation}\label{eq:admissibleslope}
\begin{aligned}
\mathcal I_{\rm sup}&:=(s_-,s_+),
\qquad s_-:=\sqrt{\frac{\epsilon}{1-\epsilon}},\\
s_+&:=\left[
\frac{\mathscr L+\sqrt{\Delta}}
{2(\gamma-1+2\epsilon)(1-\epsilon)}-1
\right]^{1/2},
\end{aligned}
\end{equation}
where
\begin{equation}\label{eq:ConstL}
\mathscr L:=\gamma(3-\epsilon)+3\epsilon-1,
\qquad
\Delta:=\mathscr L^2
-8\gamma(\gamma-1+2\epsilon)(1-\epsilon).
\end{equation}
\end{lemma}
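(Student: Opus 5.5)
The plan is to reduce both admissibility requirements to explicit inequalities in the single variable $t:=1+s^2>1$, and then identify the resulting set of slopes with $(s_-,s_+)$.

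\emph{Step 1 (compressive, entropy-admissible branch).} On the compressive branch, the entropy condition is equivalent to $M_{n,\infty}^2>1$. Since $M_{n,\infty}^2=s^2/(\epsilon t)$, this reads $s^2(1-\epsilon)>\epsilon$, that is, $s>s_-$. Under this condition one has $r>1$, and the formulas following \eqref{eq:shockcondition} give a genuine compressive jump.

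\emph{Step 2 (supersonic downstream state).} I will avoid computing $p_{\rm sh}/\rho_{\rm sh}$ directly and use the Bernoulli relation instead. With the normalization $c_\infty^2=\epsilon$, it gives
\[
c^2=\frac{\gamma-1}{2}\Bigl(1+\frac{2\epsilon}{\gamma-1}-q^2\Bigr).
\]
Hence the downstream state is supersonic, $q^2>c^2$, if and only if $(\gamma+1)q^2>\gamma-1+2\epsilon$. Decomposing the velocity into components tangential and normal to the shock, the tangential part $1/\sqrt t$ is continuous across the shock. The normal part drops from $s/\sqrt t$ to $s/(r\sqrt t)$. Therefore
\[
q_{\rm sh}^2=\frac{1+s^2/r^2}{t},
\]
which agrees with $u_{\rm sh}^2+v_{\rm sh}^2$ from the stated formulas. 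Next I insert $r=(\gamma+1)w/((\gamma-1)w+2)$ with $w=M_{n,\infty}^2=(t-1)/(\epsilon t)$. Clearing the positive denominators $t$, $\epsilon t$ and $(\gamma+1)^2w^2$ should turn the supersonic inequality into a quadratic inequality of the form
\[
Q(t):=(\gamma-1+2\epsilon)(1-\epsilon)\,t^2-\mathscr L\,t+c_0<0 .
\]
Here $c_0$ is an explicit constant whose value I expect to be $2\gamma$, which is consistent with the formula for $\Delta$ in \eqref{eq:ConstL}. The roots are $t_\pm=\bigl(\mathscr L\pm\sqrt\Delta\bigr)/\bigl(2(\gamma-1+2\epsilon)(1-\epsilon)\bigr)$. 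Because the leading coefficient is positive, the supersonic set is exactly $t_-<t<t_+$.

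\emph{Step 3 (combining the two constraints).} I will evaluate $Q$ at $t_0:=1/(1-\epsilon)$, which corresponds to $s=s_-$. At this point the shock degenerates to a Mach wave, so the downstream state equals the upstream one with $M_\infty>1$, and I expect $Q(t_0)<0$. This gives $\Delta>0$ and $t_-<t_0<t_+$ at once. Consequently the intersection $\{t>t_0\}\cap\{t_-<t<t_+\}$ is exactly $(t_0,t_+)$, which corresponds to $s\in(s_-,s_+)$ with $s_+=\sqrt{t_+-1}$. This matches \eqref{eq:admissibleslope}. The hypothesis $1<\gamma<3$ enters only through sign checks, for instance to ensure that the leading coefficient is positive.

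\emph{Main obstacle.} I expect the main difficulty to be the algebra in Step 2. Clearing denominators correctly must produce precisely the coefficients $\mathscr L$ and $c_0$, and I must confirm that no spurious factor changes sign on $t>t_0$; a candidate is $(\gamma-1)w+2$, which is positive anyway. I would first carry out the computation in the variable $w$, where $r$ is rational, and only afterwards substitute $t=1/(1-\epsilon w)$. As a cross-check of the final expression, I would verify that $Q(t_0)<0$ reduces to the condition $\epsilon<1$.
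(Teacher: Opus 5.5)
Your proposal is correct and follows essentially the paper's route: the Bernoulli relation turns supersonicity into a quadratic inequality. It is valid on the post-shock trace because the energy jump condition conserves total enthalpy. The degenerate Mach-wave endpoint $s=s_-$, where the downstream state equals the supersonic incoming state, then places $s_-$ inside the supersonic interval. Your expected constant $c_0=2\gamma$ is right, since with $t=1+s^2$ one has $Q(t)=-P(s)$ for the paper's polynomial $P$ and $Q(t_0)=-(\gamma+1)\epsilon/(1-\epsilon)$. The only difference is that you bracket the roots via $Q(t_0)<0$, whereas the paper uses the sign of the constant term of $P$ as a quadratic in $s^2$.
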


\begin{proof}
The incoming normal Mach number is
\[
M_{n,\infty}=\frac{s}{\sqrt{\epsilon(1+s^2)}}.
\]
For the nontrivial compressive branch of the full Euler shock relations,
the entropy condition is equivalent to $M_{n,\infty}>1$, that is,
$s>s_-$. The full Euler energy jump condition preserves total enthalpy,
so the post-shock state satisfies
\[
\frac{q_{\rm sh}^2}{2}+\frac{c_{\rm sh}^2}{\gamma-1}
=\frac12+\frac{\epsilon}{\gamma-1}.
\]
Combining this relation with the shock formulas following \eqref{eq:shockcondition} yields
\[
q_{\rm sh}^2-c_{\rm sh}^2
=\frac{P(s)}{s^2(\gamma+1)(1+s^2)},
\]
with $P$ as in \eqref{eq:polynomialP0}. Thus the post-shock state is
supersonic precisely when $P(s)>0$.

As a quadratic in $s^2$, $P$ has negative leading coefficient and
positive constant term. It therefore has one positive and one negative
root in that variable. The positive root is $s_+^2$, with $s_+$ given in
\eqref{eq:admissibleslope}; hence $P(s)>0$ for $s>0$ exactly when
$s<s_+$.
At $s=s_-$ the shock relations reduce to the incoming state,
so $q_{\rm sh}^2-c_{\rm sh}^2=1-\epsilon>0$.
In particular, $s_-<s_+$ and the interval is nonempty.
Combining the entropy and supersonicity conditions proves the result.
\end{proof}

\section{Shock-trace monotonicity and the sign regime}\label{app:sign-regime}

We prove the shock-trace property used to formulate the Cauchy data in
Section~2, and then record the location of the resulting sign regime.

\begin{proof}[Proof of Lemma~\ref{lem:MonotonicityofRS}]
Regard the post-shock variables as functions of $s=\phi'(x)$.
The velocity formulas following \eqref{eq:shockcondition} give
\begin{equation}\label{eq:explicitvelocitydif}
\frac{du_{\rm sh}}{ds}
=-\frac{4s}{(\gamma+1)(1+s^2)^2},
\qquad
\frac{dv_{\rm sh}}{ds}
=\frac{2[\epsilon(1+s^2)^2-s^4+s^2]}
{s^2(\gamma+1)(1+s^2)^2}.
\end{equation}
The polynomial
\begin{equation}\label{eq:polynomialP0}
P(s):=-(1-\epsilon)(\gamma-1+2\epsilon)s^4
+(4\epsilon^2+\epsilon\gamma-3\epsilon+\gamma+1)s^2
+2\epsilon^2
\end{equation}
satisfies
\[
q_{\rm sh}^2-c_{\rm sh}^2
=\frac{P(s)}{s^2(\gamma+1)(1+s^2)}.
\]
Consequently, $P(s)>0$ on $\mathcal I_{\rm sup}$.
Differentiating $q_{\rm sh}^2=u_{\rm sh}^2+v_{\rm sh}^2$ and
$\tan\theta_{\rm sh}=v_{\rm sh}/u_{\rm sh}$ yields
\begin{equation}\label{eq:explicitspeeddif}
q_{\rm sh}\frac{dq_{\rm sh}}{ds}
=-\frac{4B(s)}{s^3(\gamma+1)^2(1+s^2)^2},
\qquad
q_{\rm sh}^2\frac{d\theta_{\rm sh}}{ds}
=\frac{2N(s)}{s^2(\gamma+1)^2(1+s^2)^2},
\end{equation}
where the positive factors are
\[
B(s):=\epsilon^2(1+s^2)^2+\gamma s^4,
\qquad
N(s):=P(s)+\epsilon(\gamma+1)(1+s^2).
\]
Since $s=\phi'(x)$ and $\phi''(x)>0$, the signs
\begin{equation}\label{eq:monotonicityofspeed}
\frac{dq_{\rm sh}}{ds}<0,
\qquad
\frac{d\theta_{\rm sh}}{ds}>0
\end{equation}
yield the asserted monotonicity of $q_{\rm sh}$ and $\theta_{\rm sh}$
with respect to $x$. The Bernoulli relation and \eqref{eq:defofRiemanninvariants} give
\begin{equation}\label{eq:dotRS}
\begin{aligned}
S'_{\rm sh}
&=\phi''(x)\left(
\frac{d\theta_{\rm sh}}{ds}
-\frac{\sqrt{M_{\rm sh}^2-1}}{q_{\rm sh}}
\frac{dq_{\rm sh}}{ds}\right),\\
R'_{\rm sh}
&=\phi''(x)\left(
\frac{d\theta_{\rm sh}}{ds}
+\frac{\sqrt{M_{\rm sh}^2-1}}{q_{\rm sh}}
\frac{dq_{\rm sh}}{ds}\right).
\end{aligned}
\end{equation}
In particular, \eqref{eq:monotonicityofspeed} implies
\begin{equation}\label{eq:MonotonicityofS}
S'_{\rm sh}>0.
\end{equation}
For the other trace, substitution of \eqref{eq:explicitspeeddif} gives
\begin{equation}\label{eq:simplifieddotR}
R'_{\rm sh}
=\phi''(x)
\frac{2sN(s)-4\sqrt{M_{\rm sh}^2-1}\,B(s)}
{s^3(\gamma+1)^2(1+s^2)^2q_{\rm sh}^2}.
\end{equation}
The two terms in the numerator are positive before subtraction. Hence
\begin{equation}\label{eq:Longarrowequivalence}
\operatorname{sgn}R'_{\rm sh}
=\operatorname{sgn}\left[
s^2N(s)^2-4(M_{\rm sh}^2-1)B(s)^2\right].
\end{equation}
Finally, the full Euler post-shock formulas give
\begin{equation}\label{eq:SupersonicC}
M_{\rm sh}^2-1=\frac{(\gamma+1)P(s)}{C(s)},
\end{equation}
where
\[
C(s):=[2\epsilon+(2\epsilon+\gamma-1)s^2]
[2\gamma s^2-\epsilon(\gamma-1)(1+s^2)]>0.
\]
The positivity follows from $M_{n,\infty}>1$ and $\gamma>1$.
Multiplying the expression in \eqref{eq:Longarrowequivalence} by $C(s)$
therefore gives the sign polynomial
\begin{equation}\label{eq:Dtdefinition}
\mathcal D(s):=s^2C(s)N(s)^2
-4(\gamma+1)B(s)^2P(s),
\end{equation}
and proves the asserted sign identity.
\end{proof}

For \(\gamma=1.4\), Table~\ref{tab:sign-intervals} compares the admissible
positive shock-slope interval with the part where \(\mathcal D(s)>0\).
The endpoints follow from \eqref{eq:admissibleslope} and numerical root
finding for \eqref{eq:Dtdefinition}. The finite-Mach rows illustrate that
the sign change occurs close to the sonic endpoint. They are supplementary
shock-trace calculations, independent of the wedge reconstruction.

\begin{table}[htbp]
\centering
\caption{Admissible shock slopes and the region where
\(\mathcal D(s)>0\), for \(\gamma=1.4\). Finite endpoints are rounded
to six decimal places. The last row is the formal hypersonic limit.}
\label{tab:sign-intervals}
\begin{tabular}{ccc}
\toprule
\(M_\infty\) & \(\mathcal I_{\rm sup}\) & \(\mathcal D(s)>0\) \\
\midrule
\(5\) & \((0.204124,\,2.254928)\) & \((2.254456,\,2.254928)\) \\
\(2\) & \((0.577350,\,1.840650)\) & \((1.800046,\,1.840650)\) \\
\(+\infty\) & \((0,\,\sqrt6)\) & \(\varnothing\) \\
\bottomrule
\end{tabular}
\end{table}

For \(\epsilon=0\), the polynomial simplifies to
\[
\mathcal D(s)=-2\gamma s^{10}
\bigl(\gamma+1-(\gamma-1)s^2\bigr)
\bigl((\gamma+1)^2+(\gamma-1)^2s^2\bigr).
\]
It is strictly negative for
\(0<s<\sqrt{(\gamma+1)/(\gamma-1)}\). This algebraic limiting
observation does not extend the finite-Mach theorem to
\(\epsilon=0\), which lies outside its stated assumptions.

\section*{Acknowledgment}
Qianfeng Li was partially supported by Sino-German (CSC-DAAD) Postdoc Scholarship Program, 2023 (No. 57678375).

\bibliographystyle{plain}
\bibliography{Reference}
\end{document}